\documentclass{article} 
\usepackage{mathmag}
\usepackage{amsmath,amsthm}     
\usepackage{graphicx}     
\usepackage{hyperref} 
\usepackage{url}
\usepackage{amsfonts} 
\newtheorem{proposition}{Proposition}

\begin{document}

\title{Hanging cables and spider threads}
\author{Christoph B\"orgers\\               
\scriptsize Department of Mathematics, Tufts University\\    
177 College Avenue, Medford, MA 02155\\          
cborgers@tufts.edu}                     

\maketitle

\begin{abstract}
 It has been known for more than 300 years that 
the shape of an inelastic hanging cable, chain, or rope of uniform linear mass density is the graph of the hyperbolic cosine, up to 
scaling and shifting coordinates. But given two points at which the ends of the cable are attached, {\em how} exactly 
should we scale and shift the coordinates? Many otherwise excellent expositions of the problem are a little vague about that. 
They might for instance give the answer in terms of the tension at the lowest point, but without explaining
how to compute that tension. Here we discuss how to obtain all necessary parameters. To obtain the tension at the lowest point, one has to solve a nonlinear equation numerically. When the two ends of the cable are attached at different heights, a second nonlinear equation must be solved to determine the location of the lowest point. 
When the cable is elastic, think of a thread in a spider's web for instance, the two equations can no longer be decoupled, but they can be solved  using two-dimensional Newton iteration. 
\end{abstract}

\section{Introduction}

\begin{figure}[h]
\centering
\includegraphics[scale=0.47]{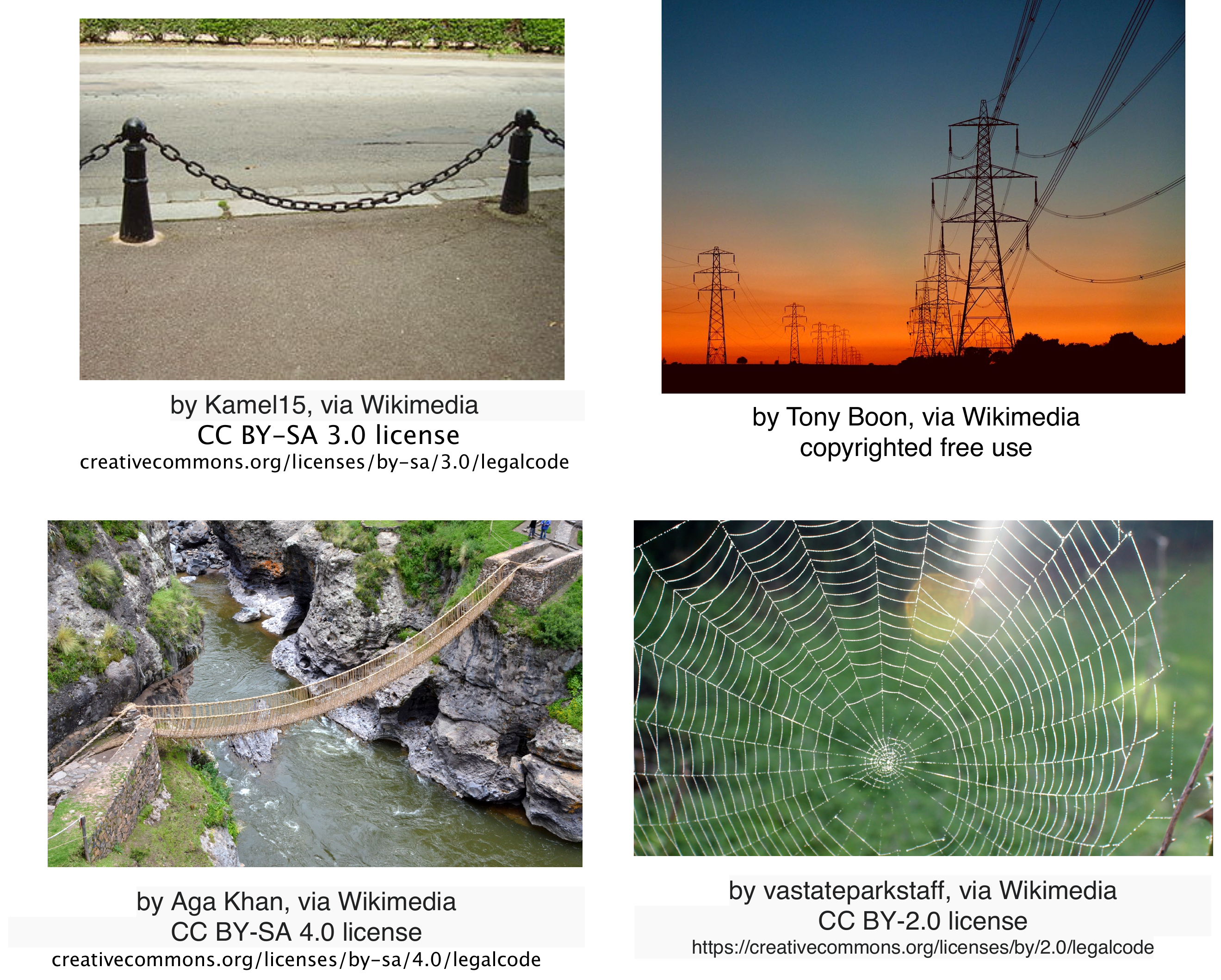}
\caption{Examples of catenaries.}
\label{fig:HANGING_CHAINS}
\end{figure}

The shape of a hanging cable (or chain or rope) is called a {\em catenary}; see Fig.\ \ref{fig:HANGING_CHAINS} for 
examples.
In 1691, in three papers published back-to-back in the
same journal \cite{Bernoulli_catenary,Huygens_catenary,Leibniz_catenary}, 
the {\em inelastic} catenary was found to be described, up to shifting coordinates, by an equation of
the form
\begin{equation}
\label{eq:catenary_equation}
\frac{y}{\lambda} = \cosh \frac{x}{\lambda}.
\end{equation}

\begin{center}
\includegraphics[scale=0.3]{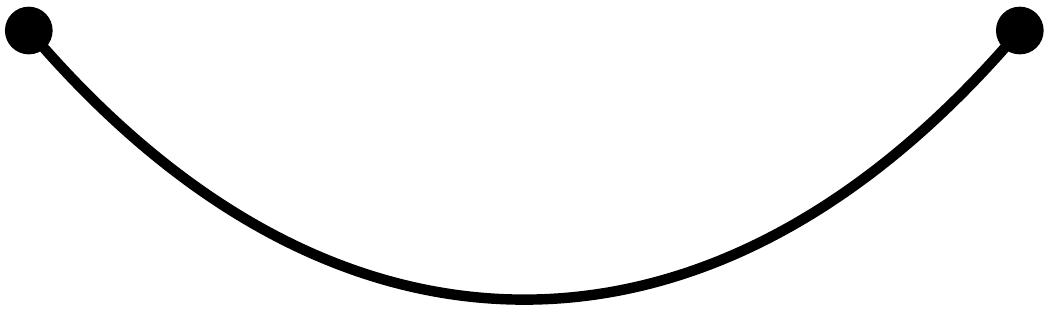}
\end{center}
\noindent
 (The notation was different back then; the notion of hyperbolic cosine did not
exist yet.) The parameter $\lambda>0$ is a length, and we will  refer to it as the {\em shape parameter.} It is also sometimes
called the {\em catenary parameter}. Note that $x$ and $y$ must be scaled the same way. No hanging chain is described by 
$y= \cosh(2x)$, if the same length unit is used for $x$ and for $y$.

Many excellent presentations of the derivation of (\ref{eq:catenary_equation}) are available. I'll give
my own below. To find $\lambda$, one must (numerically) solve a nonlinear equation.
This can be done using Newton's method, and
with  a suitably chosen initial guess, convergence is guaranteed for convexity reasons.

Countless variations have been studied. Perhaps the simplest is the question of 
what happens when 
the two ends are not anchored at the same height. For an example, see the left lower panel of Fig.\ \ref{fig:HANGING_CHAINS}, which depicts the
Queshuachaca  Rope Bridge in Peru. 

\begin{center}
\includegraphics[scale=0.3]{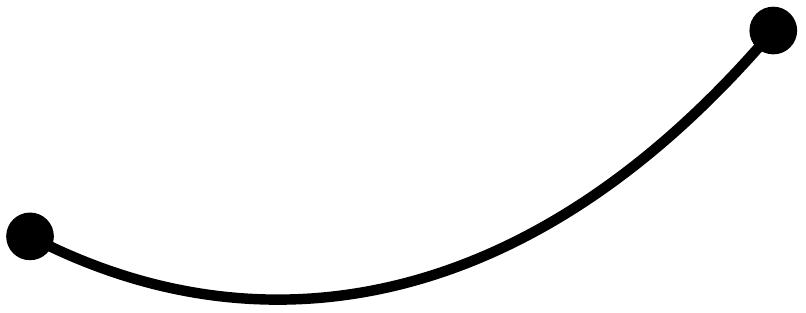}
\end{center}

\noindent
The shape is still a hyperbolic cosine, but 
now the location of the lowest point is no longer obvious by symmetry. Two coupled nonlinear
equations determine the shape parameter and the location of the lowest point. 
There is an algebraic trick by which the system can be 
decoupled, making it possible to solve first for the shape parameter, then for the location of the lowest point.
Each requires the solution  of a  (scalar) nonlinear equation. For both equations, convergence of Newton's method 
is guaranteed, if the initial guess is chosen judiciously, again for convexity reasons.

An elastic cable, for instance a spider thread, is not described by 
a hyperbolic cosine, and in fact it is no longer possible to write $y$ as a function of $x$ explicitly at all. However, both $x$ and $y$ can 
still be written explicitly as functions of $s$ $=$ arc length in the absence of tension. This, too,
has been known for centuries \cite{Elastic_catenary}. Again there is a system of two coupled nonlinear 
equations in two unknowns determining the shape parameter and the lowest point, but 
there is no longer an algebraic trick decoupling the equations. One must solve for the shape parameter
and the lowest point simultaneously. Newton's method in two dimensions, starting with the parameter values for
the inelastic case, does this reliably and with great efficiency.

\section{Inelastic cable with both ends at the same height}

\label{sec:catenary}

We think of a  cable  hanging in an $(x,y)$ plane that is perpendicular to the ground. 
The ends are attached at $(x,y) = (A, H)$ and $(x,y) = (B,H)$, and the length $L$ of the cable
is greater than $B-A$, so the cable sags.
We call the coordinates of the bottom point $x_{\rm min}$ and $y_{\rm min}$. This is the most standard
catenary problem.

\vskip 5pt

\begin{center}
\includegraphics[scale=0.3]{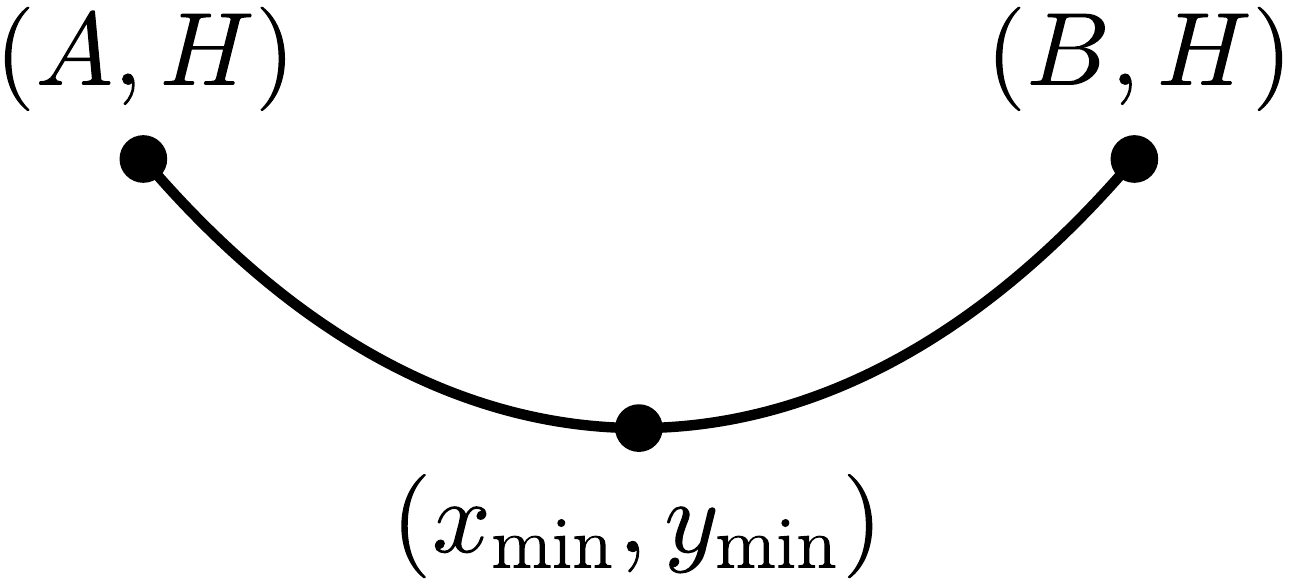}
\end{center}

\subsection{The conventional derivation of the hyperbolic cosine.} 
\label{subsec:catenary_derivation}

Focus on a segment of the cable   between the bottom
point $(x_{\rm min},y_{\rm min})$ and some point  on the right, $(x,y)$ with $x>x_{\rm min}$ and $y>y_{\rm min}$; see
Fig.\ \ref{fig:MY_CHAIN}. We could similarly discuss
a segment between $(x_{\rm min},y_{\rm min})$ and some point on the left, $(x,y)$ with $x<x_{\rm min}$ and $y>y_{\rm min}$, with analogous conclusions.
The part of the cable to the right
of $(x,y)$ pulls on this segment with a certain force tangential to the cable. 
We  call  the magnitude of this 
force the {\em tension} at $(x,y)$, and denote it by $T$.

\begin{figure}[h]
\begin{center}
\includegraphics[scale=.3]{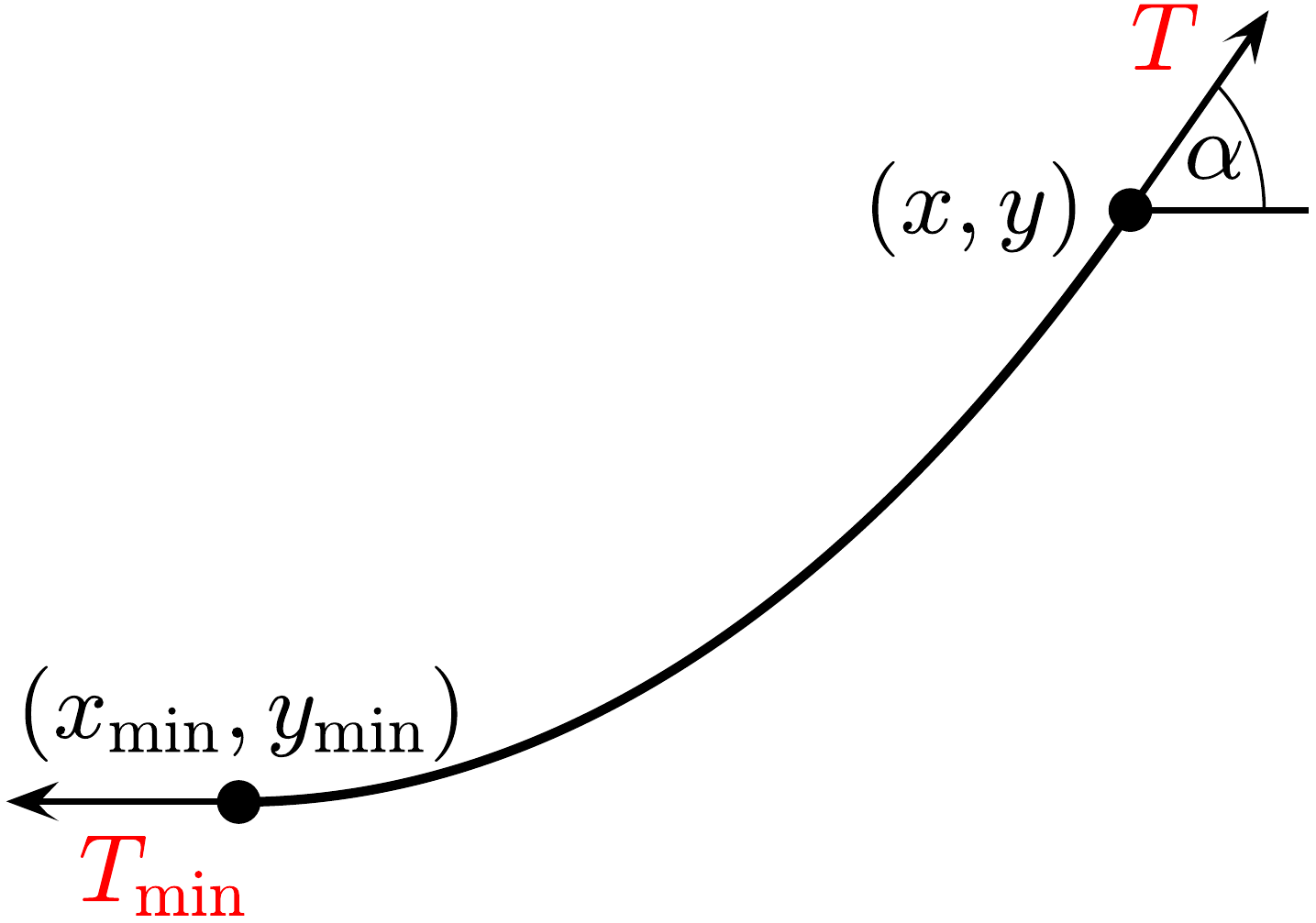}
\caption{Tension forces on a segment of the hanging cable.} 
\label{fig:MY_CHAIN} 
\end{center}
\end{figure}
\noindent

If you are like me and feel a slight discomfort now, perhaps not being {\em entirely} sure that you know what ``tension" really means, and in what sense parts of the cable
pull on other parts of the cable, then read the discussion of the {\em elastic} cable below for a better explanation of the inelastic one.

The part of the cable to the left of $(x_{\rm min},y_{\rm min})$ pulls on the segment with a force of magnitude $T_{\rm min}$, 
the tension at the bottom point $(x_{\rm min},y_{\rm min})$. 
Since the cable is stationary, the horizontal components of the two tension forces must balance: 
\begin{equation}
\label{eq:hori}
T \cos \alpha = T_{\rm min}
\end{equation}
where the definition of $\alpha \in [0, \frac{\pi}{2} )$ is indicated in Fig.\ \ref{fig:MY_CHAIN}.
 The notation $T_{\rm min}$ is doubly appropriate; not only is it the tension at the 
lowest point, it is also the minimal tension, as eq.\ (\ref{eq:hori}) shows.

Similarly, the weight of the cable segment between $(x_{\rm min},y_{\rm min})$ and $(x,y)$ must balance the vertical tension forces.
Denote by $\Delta s$
the length of the segment between $(x_{\rm min},y_{\rm min})$ and $(x,y)$. Also, denote by $\rho$ the  {\em linear mass
density}, that is the mass per unit length, of the cable. We assume $\rho$ to be constant.  The mass of our segment of
length $\Delta s$ is $\rho \Delta s$. 
Therefore the weight 
of the segment between $(x_{\rm min}, y_{\rm min})$ and $(x,y)$  is $\rho g \Delta s $, where $g$ is the gravitational acceleration.
This weight must be balanced by the vertical component of the tension force at $(x,y)$, since at $(x_{\rm min}, y_{\rm min})$, the vertical
component of the tension force is zero:
\begin{equation}
\label{eq:verti}
T \sin \alpha = \rho g \Delta s.
\end{equation}

We divide  (\ref{eq:verti}) by (\ref{eq:hori}) to obtain
$$
 \tan \alpha =  \frac{\rho g \Delta s}{T_{\rm min}}. 
$$
If we think of $y$ as a function of $x$ in Fig.\ \ref{fig:MY_CHAIN}, then $\tan \alpha$ is the derivative of $y$ with respect to $x$. We'll denote this derivative by 
$y'(x)$. So 
\begin{equation}
\label{eq:y_prime}
y'(x)  = \frac{\rho g \Delta s}{T_{\rm min}}.
\end{equation}
The arc length $\Delta s$ is a function of $x$:
$$
\Delta s = \int_{x_{\rm min}}^x \sqrt{1+y'(u)^2} ~\! du, 
$$
where again $y'$ denotes the derivative of $y$,  and we use the letter $u$ for 
no better reason than that it isn't $x$, $y$, or $s$ (which we reserve for arc length). 
Therefore
$$
y'(x) = \frac{\rho g}{T_{\rm min}} \int_{x_{\rm min}}^x \sqrt{1+y'(u)^2} ~\! du.
$$
Differentiating both sides, we get the second-order differential equation

\begin{equation}
\label{eq:diffeq}
y''(x) = \frac{\rho g}{T_{\rm min}} \sqrt{1+y'(x)^2}.
\end{equation}

To simplify the notation, we write 
\begin{equation}
\label{defl}
\lambda = \frac{T_{\rm min}}{\rho g}. 
\end{equation}
Note that $\lambda$ is a length, since $T_{\rm min}$ is  a force, and $\rho g$ is a force per unit length. 
This is the parameter that we will call the {\em shape parameter}.
Equation (\ref{eq:diffeq}) implies that $z(x) = y'(x)$ satisfies the first-order differential equation
$$
z'(x) =  \frac{1}{\lambda} \sqrt{1+z(x)^2}.
$$
By separation of variables, remembering that $\int \frac{1}{\sqrt{1+z^2}} ~\! dz = \sinh^{-1} z + C$,  and using that $z(x_{\rm min}) = y'(x_{\rm min})=0$, 
we find
$$
z(x) = y'(x)  = \sinh \left(   \frac{x-x_{\rm min}}{\lambda} \right). 
$$
We integrate one more time to obtain 
\begin{equation}
\label{y_gleich}
y(x) = \lambda \cosh \left(  \frac{x-x_{\rm min}}{\lambda} \right)  - \lambda + y_{\rm min}.
\end{equation}
We picked the constant of integration so that $y$ comes out to be $y_{\rm min}$ when $x = x_{\rm min}$. 
We re-write eq.\ (\ref{y_gleich}) as
\begin{equation}
\label{eq:y_is}
\frac{y-y_{\rm min}}{\lambda} = \cosh \left(  \frac{x- x_{\rm min}}{\lambda}  \right) - 1.
\end{equation}

The most remarkable thing about the catenary has now been said: With appropriate shifting and scaling 
of the coordinates
(with $x$ and $y$ scaled exactly the same way --- that is, using the same length units for $x$ and $y$), it is a hyperbolic
cosine. But what are $x_{\rm min}$, $y_{\rm min}$, and $\lambda$?

By symmetry, 
\begin{equation}
\label{eq:x_min} 
x_{\rm min} = \frac{A+B}{2}.
\end{equation}
If we knew $\lambda$ as well, then $y_{\min}$ could
be obtained from (\ref{y_gleich}), using that $y= H$ when $x=B$:
\begin{equation}
\label{eq:y_min} 
y_{\rm min} = H - \lambda \cosh \left(  \frac{B-A}{2 \lambda} \right)  + \lambda.
\end{equation}
However, we still have to determine the shape parameter $\lambda$, or equivalently (see eq.\ (\ref{defl})) the tension $T_{\rm min}$ at the lowest
point.

\subsection{The equation for $\lambda$, or equivalently, for the tension at the lowest point.} 

We obtain $\lambda$ from the fact that the cable has length $L$. By eq.\ (\ref{y_gleich}), this means:
\begin{eqnarray}
&~&~~
\nonumber
\int_A^B \sqrt{1+\sinh^2 \left( \frac{x-x_{\rm min}}{\lambda} \right)}~ dx =  L\\
\nonumber
&\Leftrightarrow&~~ \int_A^B \cosh \left( \frac{x-x_{\rm min}}{\lambda} \right) ~ dx = L \\
\label{eq:transcendental_0}
&\Leftrightarrow&~~ \left.  \lambda \sinh \left( \frac{x-x_{\rm min}}{\lambda} \right) \right|_A^B  = L \\
\label{eq:transcendental}
&\Leftrightarrow& ~~ \sinh \left( \frac{B-A}{2 \lambda}  \right) =  \frac{L}{2 \lambda}.
\end{eqnarray} 
Equation (\ref{eq:transcendental}) is the nonlinear equation that determines $\lambda$.
It is convenient here to make a minor change of coordinates:
\begin{equation}
\label{eq:xi_of_lambda}
\xi = \frac{B-A}{2 \lambda}, 
\end{equation}
so eq.\ 
(\ref{eq:transcendental}) becomes
\begin{equation}
\label{eq:xi_is}
\sinh \xi - \frac{L}{B-A} \xi = 0.
\end{equation}

\subsection{Finding $\lambda$.} To find $\lambda$, we solve eq.\ (\ref{eq:xi_is}) for $\xi$. The following proposition provides details.

\begin{proposition} 
\label{proposition:unique_solution}
Equation (\ref{eq:xi_is}) has exactly one positive solution $\xi$, and consequently
eq.\ (\ref{eq:transcendental}) has exactly one positive solution $\lambda$. Newton's method, applied
to (\ref{eq:xi_is})  with initial guess
$\sqrt{6 \frac{L}{B-A}}$, is  assured to converge to the positive solution.
\end{proposition}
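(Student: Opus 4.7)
My plan is to set $c = L/(B-A)$ (noting $c>1$ because the cable sags) and study $f(\xi) = \sinh \xi - c\xi$, then invoke the standard ``Newton on a convex function from the high side'' argument.

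\textbf{Uniqueness of the positive root.} First I would observe that $f(0)=0$, $f'(0) = 1 - c < 0$, $f''(\xi) = \sinh\xi > 0$ for $\xi>0$, and $f(\xi)\to\infty$ as $\xi\to\infty$. Strict convexity on $(0,\infty)$ together with the initial negative slope means $f$ decreases from $0$, reaches a unique minimum at $\xi = \cosh^{-1} c$, then increases to $+\infty$, crossing zero exactly once at some $\xi^\ast > \cosh^{-1} c$. Equivalently, by strict convexity a line through the origin (here $y = c\xi$) meets the graph of $\sinh$ in at most two points, one of which is $0$, giving exactly one positive intersection. This settles the uniqueness claim for (\ref{eq:xi_is}), and then (\ref{eq:xi_of_lambda}) transports it back to (\ref{eq:transcendental}).

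\textbf{The initial guess overshoots.} The key step, and the one that justifies the specific choice $\xi_0 = \sqrt{6c}$, is to show $f(\xi_0) > 0$, which by the sign analysis above forces $\xi_0 > \xi^\ast$. Here I would use the elementary inequality $\sinh \xi \ge \xi + \xi^3/6$ for $\xi\ge 0$ (which is immediate from the Taylor series with all positive terms). Evaluating at $\xi = \sqrt{6c}$ gives
\[
\sinh(\sqrt{6c}) \;\ge\; \sqrt{6c} + \frac{(6c)^{3/2}}{6} \;=\; (1+c)\sqrt{6c},
\]
so $f(\sqrt{6c}) \ge \sqrt{6c} > 0$. This is the whole reason the constant $6$ appears in the initial guess.

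\textbf{Newton's method converges.} Now I would run the standard argument: on the interval $[\xi^\ast,\infty)$, $f$ is convex, positive, and strictly increasing (since $\xi^\ast > \cosh^{-1} c$, we have $f'>0$ there). If $\xi_n \in [\xi^\ast,\infty)$, then the Newton step $\xi_{n+1} = \xi_n - f(\xi_n)/f'(\xi_n)$ satisfies $\xi_{n+1} < \xi_n$ (because $f(\xi_n)\ge 0$ and $f'(\xi_n)>0$), and $\xi_{n+1} \ge \xi^\ast$ because the tangent line to a convex graph lies below the graph, so its root lies to the right of any root of $f$ itself. Hence $\{\xi_n\}$ decreases and is bounded below by $\xi^\ast$, so it converges to some limit $\xi_\infty\ge \xi^\ast$; passing to the limit in the Newton recursion yields $f(\xi_\infty)=0$, so $\xi_\infty=\xi^\ast$.

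\textbf{Where the difficulty lies.} The only step that requires genuine insight rather than bookkeeping is verifying $f(\xi_0)>0$; the rest is the textbook ``Newton from above on a convex function'' routine. So the main obstacle is really just identifying why $\sqrt{6L/(B-A)}$ is the right initial guess, and the answer is that truncating $\sinh$ after its cubic term gives precisely the bound needed.
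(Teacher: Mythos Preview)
Your proof is correct and follows essentially the same route as the paper: convexity of $\sinh$ on $[0,\infty)$ gives existence and uniqueness of the positive root, and the Taylor expansion of $\sinh$ shows that $\sqrt{6L/(B-A)}$ lies to the right of that root, after which the standard ``Newton from above on a convex function'' argument applies. The only cosmetic differences are that the paper drops the linear term and uses $\sinh\xi>\xi^3/6$ directly (whereas you keep it and obtain the slightly sharper $f(\sqrt{6c})\ge\sqrt{6c}$), and that the paper invokes the Newton-on-convex argument by picture while you spell out the monotone bounded sequence; neither difference is substantive.
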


\begin{proof} 
Existence and uniqueness of a positive solution follow from the convexity of 
$\sinh \xi$ for $\xi \geq 0$, and from $\sinh(0)=0$, $\sinh'(0)=1$, and
$\frac{L}{B-A}>1$ (which holds by assumption --- the cable sags);  see Fig.\ \ref{fig:NEWTON}A.

To show that Newton's method, when starting at $\sqrt{6 \frac{L}{B-A}}$, converges to the positive solution,
it suffices, because of the convexity of the graph of $\sinh \xi - \frac{L}{B-A} \xi$, to prove that $\sqrt{6 \frac{L}{B-A}}$ is an upper bound for the positive
 solution; see Fig.\  \ref{fig:NEWTON}B. 

The following argument proves that $\sqrt{6\frac{L}{B-A}}$ is indeed an upper bound for the positive solution of eq.\ (\ref{eq:xi_is}). 
Since $\sinh \xi = \xi + \frac{\xi^3}{3!} + \frac{\xi^5}{5!} + \ldots$ we have for $\xi>0$: 
$$
\sinh \xi > \frac{L}{B-A} \xi  ~~\Leftarrow~~ \frac{\xi^3}{6} \geq \frac{L}{B-A} \xi ~~\Leftrightarrow~~ \frac{\xi^2}{6} \geq \frac{L}{B-A}   ~~ \Leftrightarrow~~ 
\xi \geq  \sqrt{6 \frac{L}{B-A}}.
$$
So if $\xi \geq  \sqrt{6 \frac{L}{B-A}}$, then $\xi$ does not solve eq.\ (\ref{eq:xi_is}). In other words, any solution of 
eq.\ (\ref{eq:xi_is}) is smaller than $\sqrt{6 \frac{L}{B-A}}$.
\end{proof}

\begin{figure}[h]
\begin{center}
\includegraphics[scale=0.5]{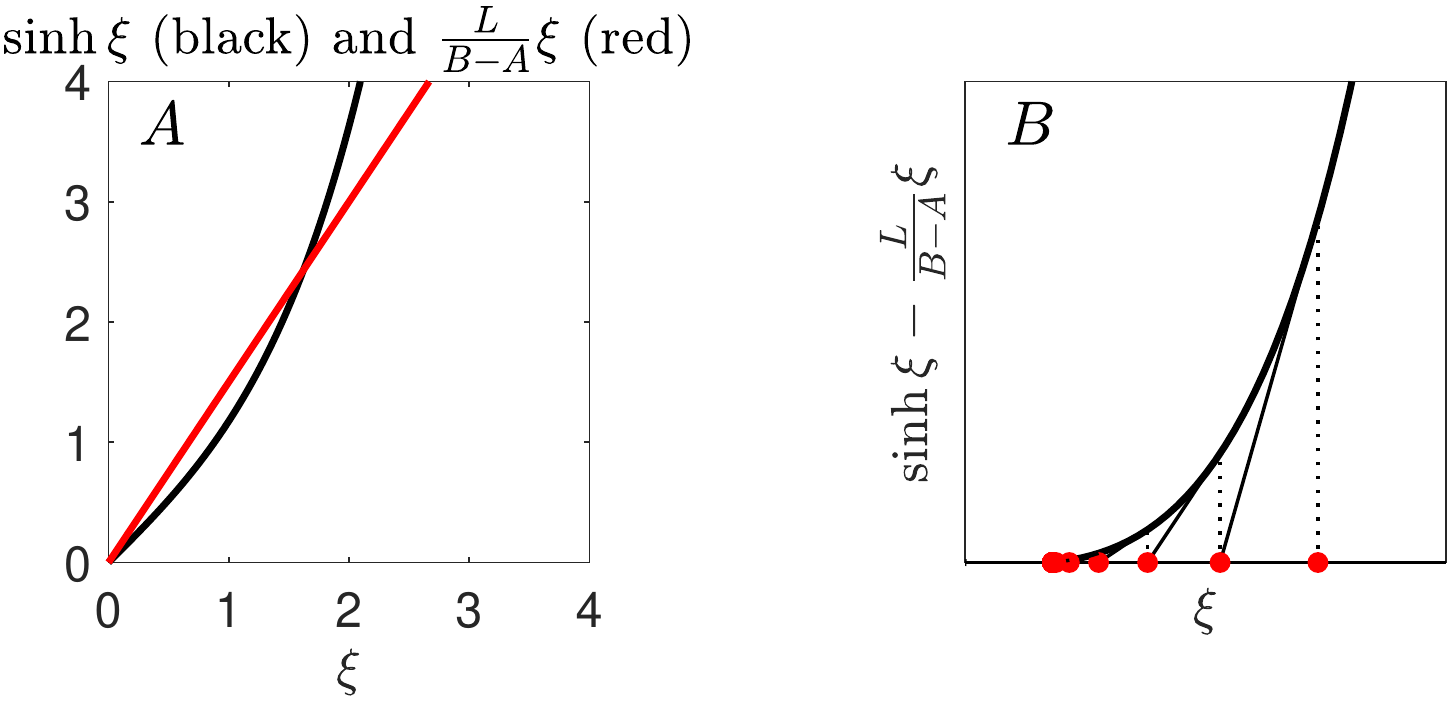}
\caption{A: The reason why $\sinh \xi = \frac{L}{B-A} \xi$ has a unique positive solution when $\frac{L}{B-A} > 1$. B: Illustration of Newton's method applied to $\sinh \xi - \frac{L}{B-A} \xi = 0$, starting to the right of the positive solution.}
\label{fig:NEWTON}
\end{center}
\end{figure}

\subsection{Summary.} 
The shape of the inelastic cable, hung up so that both ends are at the same height, is found as follows.

\begin{enumerate}
\item Find the positive solution of eq.\ (\ref{eq:xi_is}) using Newton's method with 
 initial guess $\sqrt{6 \frac{L}{B-A}}$, 
and define 
$
\lambda = \frac{B-A}{2 \xi}.
$
\item Define $x_{\rm min}$ and $y_{\rm min}$ according to eqs.\ (\ref{eq:x_min}) and (\ref{eq:y_min}). 
\item The shape of the hanging cable is given by eq.\ (\ref{eq:y_is}). 
\end{enumerate}

\subsection{Afterthoughts.}
\vskip 5pt

\begin{enumerate}
\item 
The equation is derived by considering the balance of vertical and
horizontal forces on a segment between the lowest point 
and another point. However, this implies balance of vertical and horizontal forces on any segment 
along the cable.

\item 
The shape parameter is computed without knowledge of $\rho$. The weight of the cable 
is irrelevant to its shape. 

\item 
The {\em tension} in the cable, of course, does depend on the weight. The tension at the lowest point,
for instance, is $T_{\rm min} = \rho g \lambda$ (compare eq.\ (\ref{defl})).

\item 
As $L$  tends to $B-A$, the positive solution $\xi$ of (\ref{eq:xi_is}) tends to $0$. Therefore $\lambda$ tends to $\infty$, 
and so does $T_{\rm min} = \rho g \lambda$. Therefore it is impossible for the cable not to sag at all; 
that would require infinite tension.
\end{enumerate}

\subsection{Inverted catenaries in architecture.}

There are countless examples of arches approximately in the shape of  (upside-down) catenaries in architecture, as well as domes approximately in the shape of {\em catenary rotation surfaces} \cite[Chapter 7]{Gohnert}.  Such a surface is obtained
by rotating a catenary around its  vertical axis of symmetry \cite{Lopez_2022}.\footnote{It is not to be confused with the {\em catenoid}   obtained 
by rotating a catenary around the {\em horizontal} ($x$-)axis; see \cite{de_la_Grandville_2022} for a recent fascinating discussion of catenoids.}

Catenary arches  have a special stability
property, the mirror image of the force balance that leads to the equation of the catenary; the horizontal 
and vertical forces on any segment are in balance. A catenary rotation surface does not have the analogous property but is not 
far from a surface that does \cite{Boehme_et_al_1980, Lopez_2022}. 

Figure \ref{fig:CATENARIES_IN_ARCHITECTURE}
shows examples of catenary arches and domes in panels A--C. Panel D of the figure shows the Gateway Arch in St.\ Louis, and it is {\em not} an inverted catenary; it is instead a curve of the form
$$
\frac{y}{\lambda_y} = - \cosh \frac{x}{\lambda_x}
$$
(after shifting the coordinates appropriately), with $\lambda_x \approx 1.45 \lambda_y$ \cite{Osserman}. This is called a {\em weighted catenary}. 

\begin{figure}[h] 
\begin{center}
\includegraphics[scale=0.35]{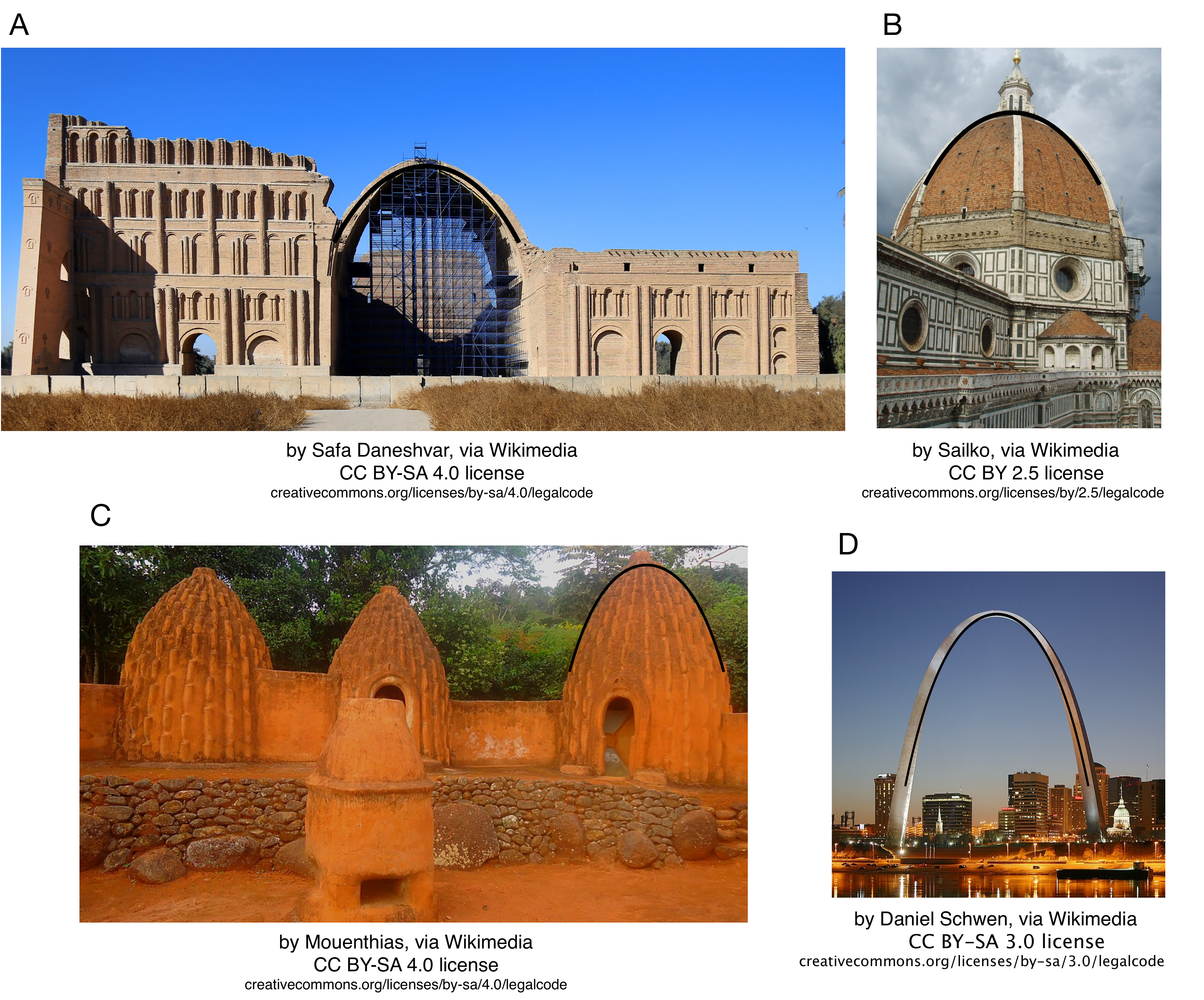}
\caption{Examples of catenaries and catenary domes in architecture. A: The Arch of Ctesiphon, a Persian monument in present-day Iraq, about 1500 years old. B: The dome of the cathedral of Florence, built between 1296 and 1436. C: 
Traditional houses of the Musgum people in Cameroon. D: The Gateway Arch in St.\ Louis. It is a {\em weighted} catenary.
The black curves are catenaries in panels A--C, and a weighted catenary in panel D.}
\label{fig:CATENARIES_IN_ARCHITECTURE}
\end{center}
\end{figure}

\section{Inelastic cable with ends at different heights}
\label{sec:different_heights}

Now we assume that 
the ends are attached at $(x,y) = (A, H)$ and $(x,y) = (B,K)$, and without loss of generality
$
H \leq  K.
$
We assume that  the length $L$ of the cable is greater than the distance between $(A,H)$ and $(B,K)$, so the cable
sags:
\begin{equation}
\label{eq:long_enough}
L > \sqrt{(B-A)^2+(K-H)^2}.
\end{equation}

\subsection{It's still a hyperbolic cosine.}

Our previous arguments still show that 
the solution is of the form given by eq.\ (\ref{eq:y_is}), repeated here for convenience:
$$
\hskip 108pt
\frac{y-y_{\rm min}}{\lambda} = \cosh \left(  \frac{x- x_{\rm min}}{\lambda}  \right) - 1.
\hskip 84pt
(\ref{eq:y_is}) 
$$

\begin{center}
\includegraphics[scale=0.3]{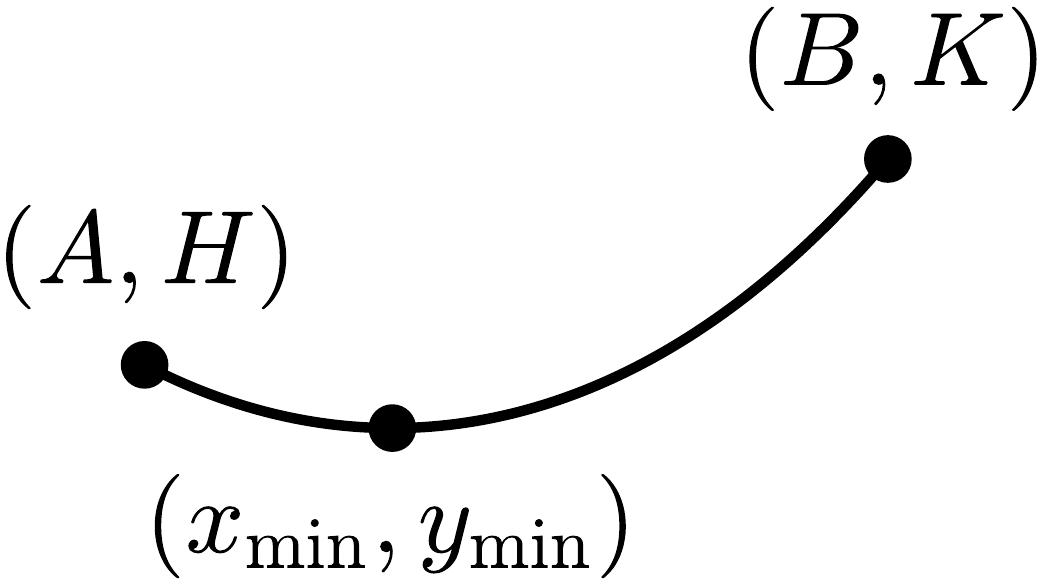}
\end{center}

The complication is that there is no symmetry argument telling us the value of $x_{\rm min}$ any 
longer. In fact, $x_{\rm min}$ could even be to the left of $A$.

\vskip 5pt
\begin{center}
\includegraphics[scale=0.3]{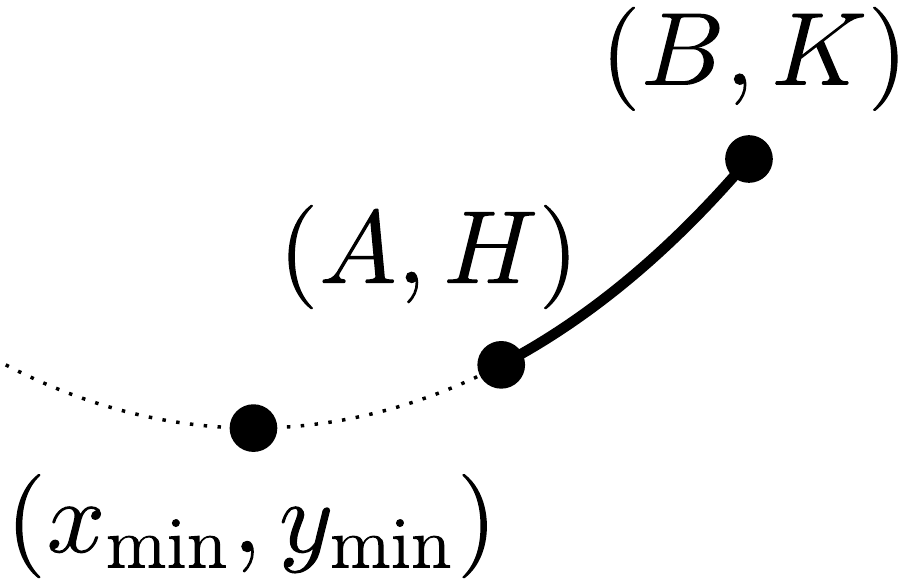}
\end{center}

\subsection{Two equations for the two unknowns $\lambda$ and $x_{\rm min}$.}

The three parameters $\lambda$, $x_{\rm min}$, and $y_{\rm min}$ must be chosen so that three conditions hold:
$$
y(A) = H, ~~~ y(B) = K, ~~~ \mbox{length of cable} = L.
$$
However, we can easily derive two equations for the two parameters $\lambda$ and $x_{\rm min}$: 
$$
y(B) - y(A) = K-H, ~~~ \mbox{length of cable} = L, 
$$
or explicitly, using 
(\ref{y_gleich}) and (\ref{eq:transcendental_0}), 
\begin{eqnarray}
\label{eq:cosh}
 \cosh \left( \frac{B-x_{\rm min}}{\lambda} \right) &-& \cosh \left( \frac{A-x_{\rm min}}{\lambda} \right) ~~=~
\frac{K-H}{\lambda}, \\
\label{eq:sinh}
\sinh \left( \frac{B- x_{\rm xmin}}{\lambda} \right) &-& \sinh \left( \frac{A- x_{\rm xmin}}{\lambda} \right)  ~=~ \frac{L}{\lambda}.
\end{eqnarray}
Once $\lambda$ and $x_{\rm min}$ are known, $y_{\rm min}$  can be obtained from $y(B)=K$ using (\ref{y_gleich}):
\begin{equation}
\label{eq:y_min_general}
y_{\rm min} = K - \lambda \cosh \left( \frac{B-x_{\rm min}}{\lambda} \right) + \lambda.
\end{equation}

\subsection{Finding $\lambda$.} Now there is an algebraic trick. We square (\ref{eq:cosh}) and (\ref{eq:sinh}), subtract them from each other, and use 
$\cosh^2 u - \sinh^2 u = 1$ and $\cosh u \cosh v - \sinh u \sinh v = \cosh(u-v)$ for all $u$ and $v$. We thereby get this:
$$
2 - 2 \cosh \left( \frac{B-A}{\lambda} \right) = \frac{(K-H)^2-L^2}{\lambda^2}, 
$$
or equivalently, 
\begin{equation}
\label{eq:first_equation_for_lambda}
\cosh \left( \frac{B-A}{\lambda} \right) - 1 = \frac{L^2 - (K-H)^2}{2 \lambda^2}.
\end{equation}
This is a single equation for $\lambda$. It can be written in a more appealing way by using one more hyperbolic trigonometric formula: $\cosh u = 1+ 2 \sinh^2 \frac{u}{2}$. With that (\ref{eq:first_equation_for_lambda}) becomes
$$
\sinh^2 \left( \frac{B-A}{2 \lambda} \right)  = \frac{L^2-(K-H)^2}{4 \lambda^2}, 
$$
or equivalently, 
\begin{equation}
\label{eq:2nd_equation_for_lambda}
\sinh \left( \frac{B-A}{2 \lambda} \right) =  \frac{\sqrt{L^2-(K-H)^2}}{2 \lambda}.
\end{equation}

Equation (\ref{eq:2nd_equation_for_lambda}) is precisely the same as eq.\ (\ref{eq:transcendental}), except that $L$ has been replaced by $\sqrt{L^2-(K-H)^2}$. 
Notice also that (\ref{eq:long_enough}) implies that $\sqrt{L^2-(K-H)^2} > B-A$.  Proposition \ref{proposition:unique_solution} therefore applies, with $L$ replaced by $\sqrt{L^2-(K-H)^2}$. Equation (\ref{eq:2nd_equation_for_lambda}) has a unique positive solution $\lambda$, and we can compute $\xi = \frac{B-A}{2 \lambda}$ using Newton's 
method, applied to 
\begin{equation}
\label{eq:Newton_eq}
\sinh \xi - \frac{\sqrt{L^2-(K-H)^2}}{B-A} ~ \xi = 0,
\end{equation}
with initial guess $\sqrt{\frac{6 \sqrt{L^2-(K-H)^2}}{B-A}}$.

\subsection{Finding $x_{\rm min}$.} Once $\lambda$ is known, $x_{\rm min}$ can be obtained by solving eq.\ (\ref{eq:cosh}) for $x_{\rm min}$.
The following proposition provides details.

\begin{proposition} Let $A<B$, $H \leq K$, and $\lambda>0$. Let, for $x \in \mathbb{R}$, 
\begin{equation}
\label{eq:defg}
g(x) = \cosh \left( \frac{B-x}{\lambda} \right) - \cosh  \left( \frac{A-x}{\lambda} \right) - \frac{K-H}{\lambda}
\end{equation}
so  eq.\ (\ref{eq:cosh})  becomes $g(x_{\rm min}) = 0$. 
\begin{enumerate}
\item[(a)] $g$ is strictly decreasing with $\lim_{x \rightarrow -\infty} = \infty$ and $\lim_{x \rightarrow \infty} g(x) = -\infty$, so there is a unique solution, $x_{\rm min}$, 
of $g(x) = 0$, 
\item[(b)] $g \left( \frac{A+B}{2} \right) \leq 0$, so $x_{\rm min} \leq \frac{A+B}{2}$, 
\item[(c)] $g''(x)>0$ for $-\infty < x < \frac{A+B}{2}$, and 
\item[(d)] Newton's method for $g(x)=0$, starting with the initial guess $\frac{A+B}{2}$, converges to the unique solution, $x_{\rm min}$, of $g(x)=0$.
\end{enumerate}
\end{proposition}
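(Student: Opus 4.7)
Parts (a)--(c) should fall out of direct differentiation combined with the product-to-sum identity $\cosh u - \cosh v = 2\sinh\frac{u+v}{2}\sinh\frac{u-v}{2}$. Taking $u = (B-x)/\lambda$ and $v = (A-x)/\lambda$ rewrites
$g(x) = 2\sinh\bigl(\tfrac{A+B-2x}{2\lambda}\bigr)\sinh\bigl(\tfrac{B-A}{2\lambda}\bigr) - \tfrac{K-H}{\lambda}$.
The second $\sinh$ is a positive constant, and the first is strictly decreasing in $x$ with limits $\pm\infty$ as $x\to\mp\infty$, giving (a); substituting $x=(A+B)/2$ makes the $\sinh(0)$ factor vanish and leaves $-\tfrac{K-H}{\lambda}\leq 0$, giving (b). Differentiating $g$ twice yields $g''(x) = \lambda^{-2}\bigl[\cosh((B-x)/\lambda) - \cosh((A-x)/\lambda)\bigr]$, which by evenness and strict monotonicity of $\cosh$ on $[0,\infty)$ is positive iff $|B-x|>|A-x|$, i.e.\ iff $x<(A+B)/2$; this is (c).

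Part (d) is the real work. The awkward feature is that starting at $x_0=(A+B)/2$ with $g(x_0)\leq 0$ and $g$ both convex and decreasing there is the ``wrong'' sign combination for the one-sided Newton template used in Proposition~\ref{proposition:unique_solution}: the first Newton step will in general overshoot the root. The plan is to show the overshoot is controlled --- $x_1\leq x_{\rm min}$ and $x_1$ still lies in the convex region $(-\infty,(A+B)/2)$ --- so that from $x_1$ onward the iteration reduces to the familiar monotone-from-below story.

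If $g(x_0)=0$ there is nothing to prove. Otherwise $g(x_0)<0$ and $x_{\rm min}<x_0$, and by the convexity from (c) the tangent to $g$ at $x_0$ lies below $g$ on $(-\infty,(A+B)/2]$. Evaluating the tangent-below-graph inequality at $x_{\rm min}$ gives $0=g(x_{\rm min})\geq g(x_0)+g'(x_0)(x_{\rm min}-x_0)$; since $g'(x_0)<0$, rearranging yields $x_1 \leq x_{\rm min}$, and then strict monotonicity of $g$ from (a) gives $g(x_1)\geq 0$. From $x_1$ on, the standard convex-Newton argument applies: the tangent at each $x_n<(A+B)/2$ again lies below the graph, forcing $x_{n+1}\leq x_{\rm min}$, while the sign $g(x_n)\geq 0$ combined with $g'(x_n)<0$ forces $x_{n+1}\geq x_n$. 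Thus $x_1\leq x_2\leq \cdots$ is increasing and bounded above by $x_{\rm min}$, and its limit $x^*$ satisfies $g(x^*)=0$, hence $x^*=x_{\rm min}$ by uniqueness from (a). The only non-routine step is the first-step overshoot bound; everything after $x_1$ is standard convexity bookkeeping.
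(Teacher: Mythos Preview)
Your argument is correct. For (a)--(c) your use of the product-to-sum identity $\cosh u-\cosh v=2\sinh\frac{u+v}{2}\sinh\frac{u-v}{2}$ is a tidy alternative to the paper's more direct route, which computes $g'$ explicitly and reads off the limits from the exponential form of $\cosh$ for (a), and for (c) observes that $g''(x)=\lambda^{-2}\bigl(g(x)+(K-H)/\lambda\bigr)$ inherits strict monotonicity from $g$ and vanishes at $(A+B)/2$. Both approaches are elementary and of comparable length; yours makes the role of the midpoint $(A+B)/2$ visible from the outset.

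For (d) you actually go further than the paper, which simply says that after (a)--(c) the claim ``is so clear pictorially that we'll refrain from proving it analytically.'' Your identification of the subtlety --- that starting at $x_0=(A+B)/2$ with $g(x_0)\le 0$ and $g'(x_0)<0$ is the ``wrong'' sign pattern, so the first Newton step overshoots to $x_1\le x_{\rm min}$ --- is exactly what the figure is meant to convey, and your tangent-below-graph bound using convexity on $(-\infty,(A+B)/2]$ makes it rigorous. From $x_1$ on, your monotone-increasing, bounded-above-by-$x_{\rm min}$ argument is the standard convex-Newton story. So your treatment of (d) is strictly more complete than the paper's.
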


See Fig.\ \ref{fig:PLOT_FICTICIOUS_G} for illustration.

\begin{figure}[h]
\begin{center}
\includegraphics[scale=0.3]{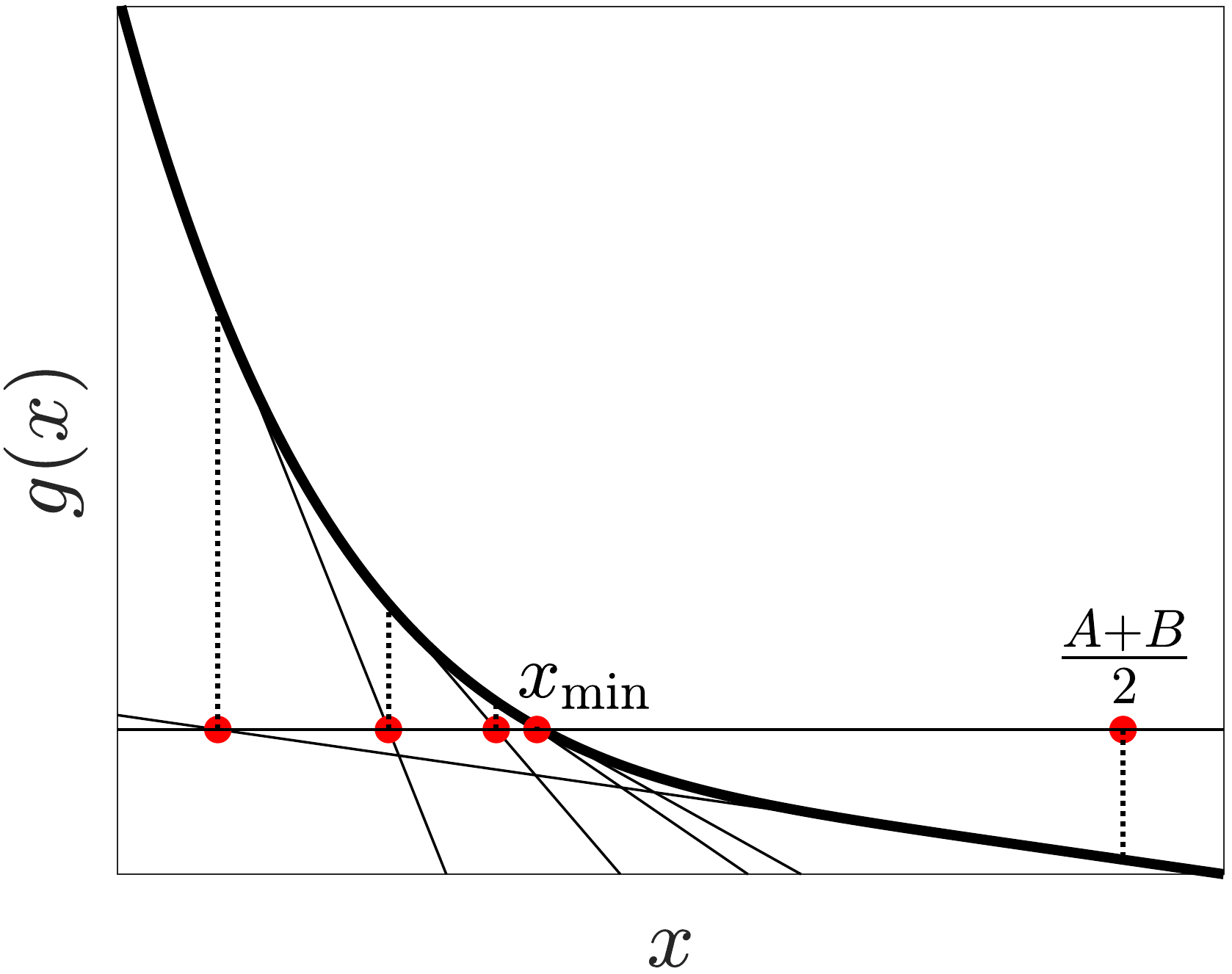}
\caption{Illustration of Newton's method applied to $g(x)=0$.}
\label{fig:PLOT_FICTICIOUS_G}
\end{center}
\end{figure}

\begin{proof}
(a) For all $x$, 
$$
g'(x) = \frac{1}{\lambda} \left( \sinh \left( \frac{A-x}{\lambda} \right) -\sinh \left( \frac{B-x}{\lambda} \right) \right) <0
$$
because $\lambda>0$, $A< B$, and $\sinh$ is a strictly increasing function. Using the definition of $\cosh$, 
\begin{equation}
\label{eq:g_explicit}
g(x) = \frac{e^{ (B-x)/\lambda} + e^{- (B-x)/\lambda}}{2} -\frac{e^{ (A-x)/\lambda} + e^{- (A-x)/\lambda}}{2}  - \frac{K-H}{\lambda}
\end{equation}
As $x \rightarrow -\infty$, (\ref{eq:g_explicit}) equals 
$$
\frac{e^{(B-x)/\lambda} - e^{(A-x)/\lambda}}{2} + O(1)
$$
(the notation $O(1)$ means ``terms that remain bounded in the limit"), and 
$$
\frac{e^{(B-x)/\lambda} - e^{(A-x)/\lambda}}{2} = e^{(A-x)/\lambda}~  \frac{e^{(B-A)/\lambda} - 1}{2} \rightarrow \infty
$$
as $x \rightarrow -\infty$. One sees in a similar way that $g(x) \rightarrow -\infty$ as $x \rightarrow \infty$.
\vskip 5pt
\noindent
(b) 
$$
g \left( \frac{A+B}{2} \right) = - \frac{K-H}{\lambda} \leq 0
$$
because $H \leq K$.

\vskip 5pt
\noindent
(c) 
$$
g''(x) = \frac{1}{\lambda^2} \left( \cosh \left( \frac{B-x}{\lambda} \right) -\cosh \left( \frac{A-x}{\lambda} \right) \right) = \frac{1}{\lambda^2} \left( g(x) + \frac{K-H}{\lambda} \right)
$$
is strictly decreasing since $g$ is known to be strictly decreasing by (a). Since $g'' \left( \frac{A+B}{2} \right) = 0$, (c) follows.

\vskip 5pt
\noindent
(d) After (a)--(c) have been proved, this is so clear pictorially (see Fig.\ \ref{fig:PLOT_FICTICIOUS_G}) that we'll refrain from proving it analytically.
\end{proof}

\subsection{Summary.} The shape of the inelastic cable, hung up so that the two ends are at different heights, is found as follows.

\begin{enumerate}
\item Find the positive solution of eq.\ (\ref{eq:Newton_eq}) using Newton's method with 
 initial guess $\sqrt{6 \frac{\sqrt{L^2-(K-H)^2}}{B-A}}$, 
and define 
$
\lambda = \frac{B-A}{2 \xi}.
$
\item With $g$ defined as in (\ref{eq:defg}), solve $g(x_{\rm min})=0$ for $x_{\rm min}$, using Newton's method with initial guess $\frac{A+B}{2}$.
\item Compute $y_{\rm min}$ from eq.\ (\ref{eq:y_min_general}). 
\item The shape of the hanging cable is given by eq.\ (\ref{eq:y_is}). 
\end{enumerate}

Two examples are shown in Fig.\ \ref{fig:TWO_EXAMPLES_NEW}.

\begin{figure}[h]
\begin{center}
\includegraphics[scale=0.5]{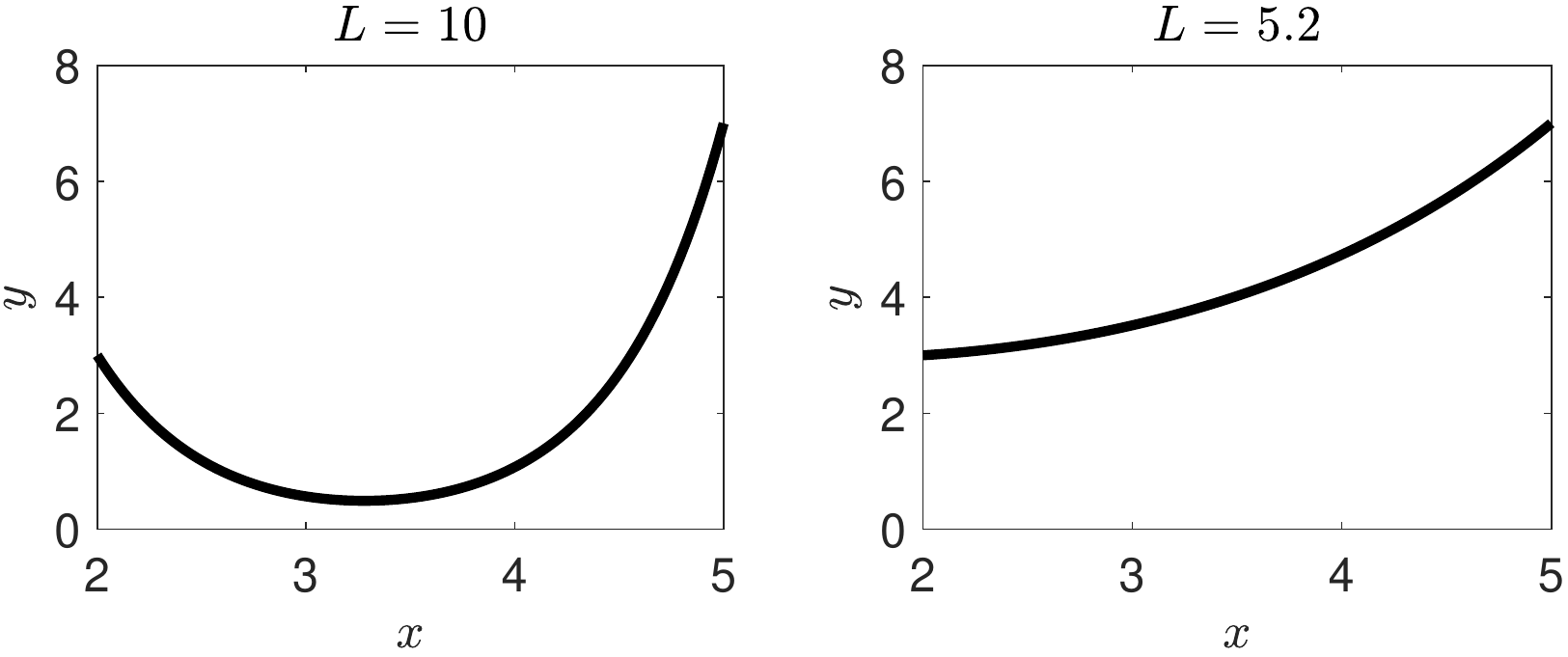}
\caption{A long and a short hanging cable, with $A=2$, $B=5$, $H=3$, $K=7$.}
\label{fig:TWO_EXAMPLES_NEW}
\end{center}
\end{figure}

\subsection{Afterthought.} 
As $L$ tends to $ \sqrt{(A-B)^2+(K-H)^2}$, the solution of  (\ref{eq:Newton_eq}) tends to zero, and
therefore $\lambda$ tends to $\infty$. Again we see that it is impossible for the cable to have 
no sag at all.

\subsection{Arc length parametrization.} We will parametrize the hanging cable with respect to arc length. 
That's entirely unnecessary, but it 
will make the analogy with the elastic case discussed later   more transparent. 
 
From eq.\ (\ref{eq:y_is}), we see that the arc length $s$ between the left end point of the cable and the point
at $x \in [A,B]$ is 
$$
s = \int_A^x \sqrt{1 + \sinh^2 \left( \frac{u-x_{\rm min}}{\lambda} \right)} ~ du.
$$
Using 
$\sqrt{1+\sinh^2} =\cosh$ we evaluate the integral and find
\begin{equation}
\label{x_to_s}
s = 
\lambda \sinh \left( \frac{x-x_{\rm min}}{\lambda} \right) + \lambda \sinh \left( \frac{x_{\rm min}-A}{\lambda} \right).
\end{equation}
The arc length parameter associated with $x_{\rm min}$, in particular, is 
\begin{equation}
\label{s_min}
s_{\rm min} = \lambda \sinh \left( \frac{x_{\rm min}-A}{\lambda} \right).
\end{equation}
Solving (\ref{x_to_s}) for $x$ and using   (\ref{s_min}), we find the relation between $x$ and the arc length $s$:
\begin{equation}
\label{eq:x_of_s}
\frac{x-x_{\rm min}}{\lambda} = \sinh^{-1} \left(  \frac{s-s_{\rm min}}{\lambda} \right).
\end{equation}
With that,  (\ref{eq:y_is}) becomes
\begin{equation}
\label{eq:y_of_s}
\frac{y-y_{\rm min}}{\lambda}  = \cosh \sinh^{-1} \left( \frac{s-s_{\rm min}}{\lambda} \right)   -1 .
\end{equation}
For any $u \in \mathbb{R}$, 
$
\cosh \sinh^{-1}(u) = \sqrt{1+u^2}.
$
(This follows  from $\cosh^2 - \sinh^2 = 1$.) Therefore eq.\ (\ref{eq:y_of_s})  can also be written like this: 
\begin{equation}
\label{eq:y_of_s_rewritten}
\frac{y-y_{\rm min}}{\lambda}  = \sqrt{1+\left( \frac{s-s_{\rm min}}{\lambda} \right)^2}  -1 .
\end{equation}
Equations (\ref{eq:x_of_s}) and (\ref{eq:y_of_s_rewritten}) describe the hanging cable parametrized by arc length.

\vskip 5pt

\section{Elastic cable or spider thread} \label{sec:elastic} Now we consider a cable that can stretch.
The threads of a spider web are an example. By passing to the limit of 
zero compliance, this discussion will also yield an alternative 
derivation of the standard hyperbolic cosine formula discussed in the 
preceding sections. This derivation is  less straightforward than the standard one explained earlier;
however, it explains the tangential tension forces more clearly.

\subsection{Background on Hooke's constant, compliance, and springs in series.} 

A linear spring with resting length $h$, extended to length $\ell$, contracts with force 
$
F = \kappa (\ell - h), 
$
where the constant of proportionality $\kappa$ is called {\em Hooke's constant}. 
Its reciprocal $c = 1/\kappa$ is called the {\em compliance} of the spring, so 
$
F = \frac{\ell-h}{c}.
$
The physical dimension of  $c$ is length per force. The greater the compliance, the easier is it to extend the spring.

\begin{figure}[h]
\begin{center}
\includegraphics[scale=0.35]{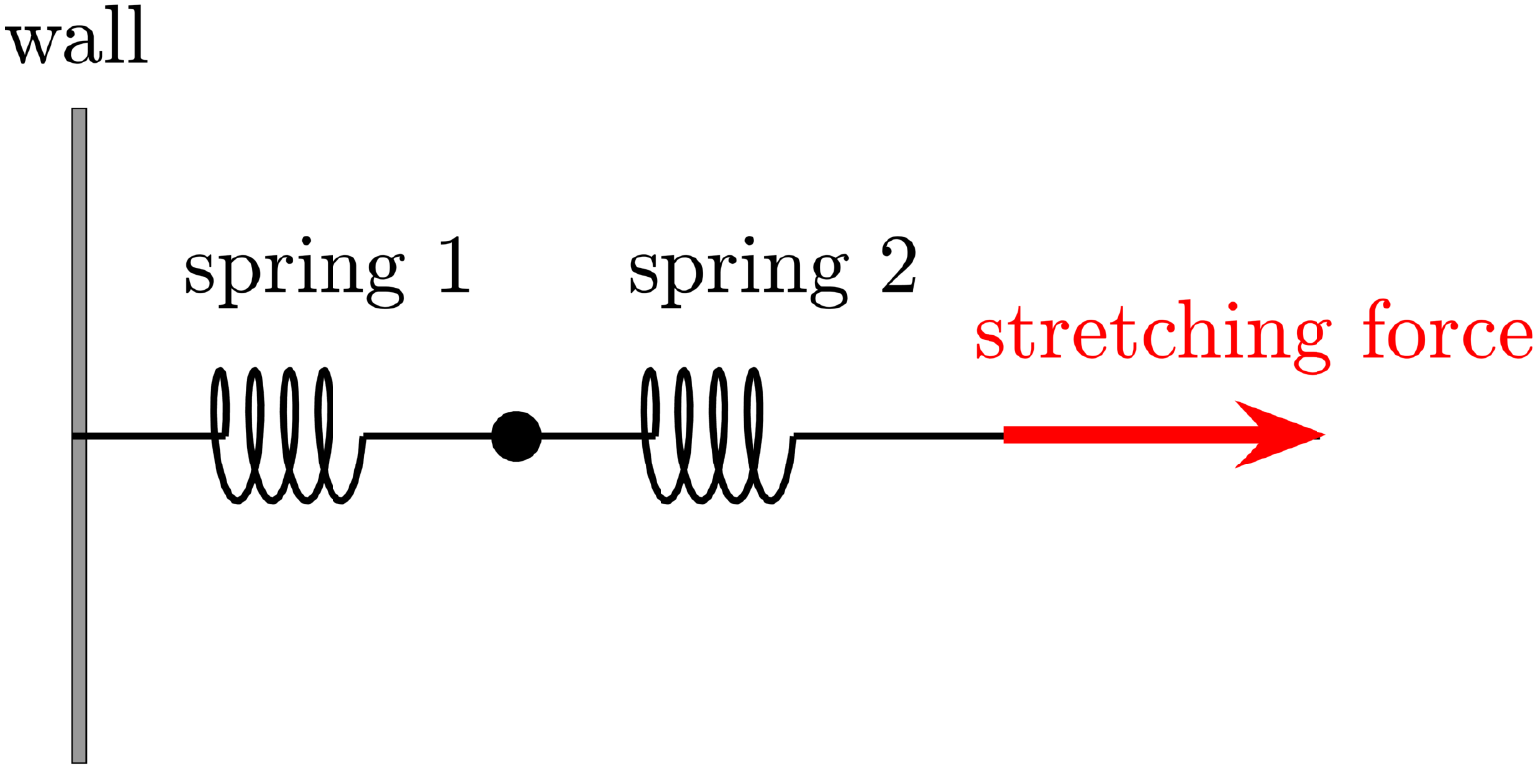}
\caption{Two springs in series being stretched.}
\label{fig:TWO_SPRINGS}
\end{center}
\end{figure}

Consider  now two springs in series, with  compliances $c_1$ and $c_2$ and resting lengths $h_1$ and $h_2$, 
attached on one end to a wall
as in Fig.\ \ref{fig:TWO_SPRINGS}.
Suppose you extend the springs from their combined resting length $h=h_1+h_2$ to some length $\ell$. The springs' lengths
will be $\ell_1$ and $\ell_2$, and Newton's third law implies that the springs pull on each other with precisely the 
overall stretching force:
\begin{equation}
\label{compliances}
\frac{\ell_1-h_1}{c_1} =   \frac{\ell_2-h_2}{c_2} = \frac{\ell-h}{c}
\end{equation}
where $c$ is the compliance of the combined spring made up of springs 1 and 2. From (\ref{compliances}), 
$$
\ell_1-h_1 =  \frac{c_1}{c} (\ell-h) ~~~~\mbox{and} ~~~ \ell_2-h_2 =  \frac{c_2}{c} (\ell-h).
$$
Summing these two equations, we find 
$$
\ell - h = \frac{c_1+c_2}{c} (\ell-h)
$$
and therefore 
\begin{equation}
c=c_1+c_2.
\end{equation}
The conclusion is that compliances add when springs are connected in series.

\subsection{String of mass points connected by springs.} 
\label{subsec:vertical}

Think about a string of finitely many mass points connected by massless springs. Later we will pass to a continuum 
limit.
I'll use the word ``cable" after passing to the continuum limit, but ``string" for the finitely many mass points
connected by springs.

So consider a string of $N+1$ mass points, connected by $N$ identical massless springs.  Assume that the resting lengths of the springs
are all the same; we denote them by $h$.  Assume that each spring has compliance $q h$, where $q>0$ is a fixed constant, called the {\em linear compliance density} (compliance per unit length),  a reciprocal force. Since compliances
sum when the springs are arranged in series, 
the compliance
of the string becomes $q h N = q L$, where $L$ is the length of the string when it is not 
under any tension.

Assume similarly that each mass point has mass 
$\rho h$, except for the two end points, which have mass $\rho h/2$, where $\rho>0$ is the linear mass
density. So altogether the mass of the string is $\rho h N =  \rho L$. Since $q$ is a reciprocal force,
the quantity 
\begin{equation}
\label{def_gamma}
\gamma = q  \cdot \rho L g = \mbox{linear compliance density} \cdot \mbox{weight of cable}
\end{equation}
is non-dimensional. It
quantifies the importance of elasticity for the cable, and will play an important role
in our analysis.

\begin{figure}[h]
\begin{center}
\includegraphics[scale=0.3]{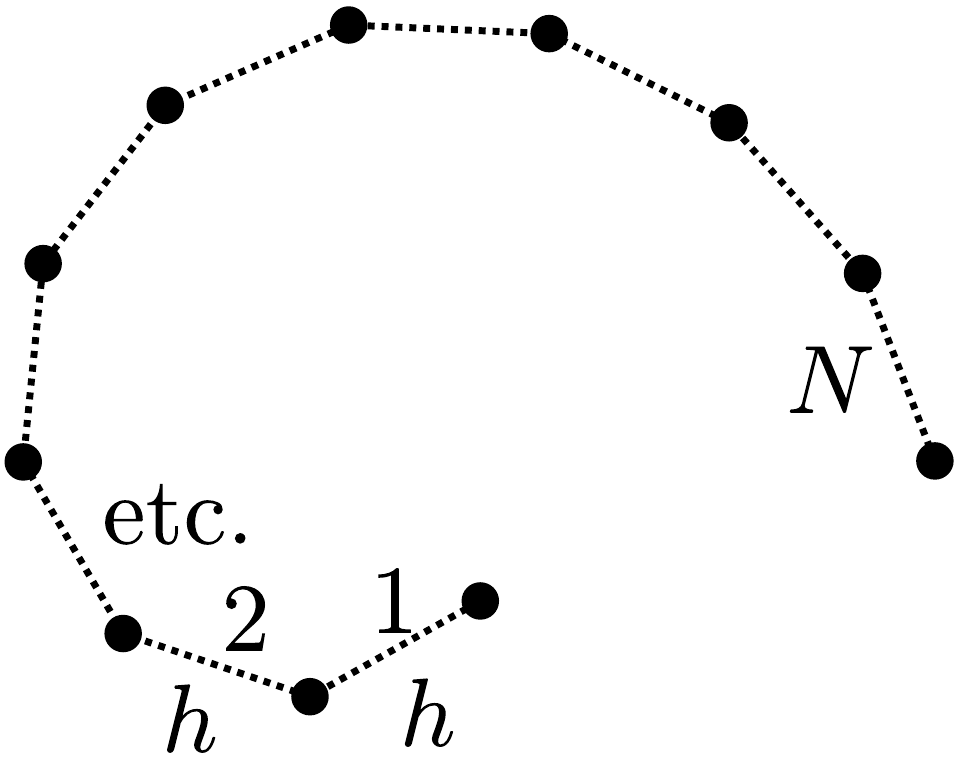}
\caption{The string of springs resting on the ground, under no tension.}
\end{center}
\end{figure}

\vskip 10pt
\subsection{String attached at both ends.}

Suppose now that we attach the string, as before, at $(x,y)=(A,H)$ and $(x,y)=(B,K)$, 
with $H \leq K$; see 
Fig.\ \ref{fig:CHAIN_ANGLES}. The position of the $i$-th mass point  is $(x_i,y_i)$, with
$$
(x_0,y_0) = (A,H) ~~~~~\mbox{and} ~~~~~ (x_N,y_N) = (B,K).
$$
We write 
$$
\ell_i = \sqrt{(x_i-x_{i-1})^2+(y_i-y_{i-1})^2}
$$
for the extended length of the $i$-th spring, 
and denote by $\alpha_i$ the angle between the $x$-axis and the $i$-th spring segment, $- \pi/2 < \alpha_i < \pi/2$; see Fig.\ \ref{fig:CHAIN_ANGLES}. 
We have 
\begin{equation}
\label{sincosalpha}
\sin \alpha_i =
\frac{y_i-y_{i-1}}{\ell_i}
~~~\mbox{and} ~~~
\cos \alpha_i =
\frac{x_i-x_{i-1}}{\ell_i}.
\end{equation}

\begin{figure}[h]
\begin{center}
\includegraphics[scale=0.3]{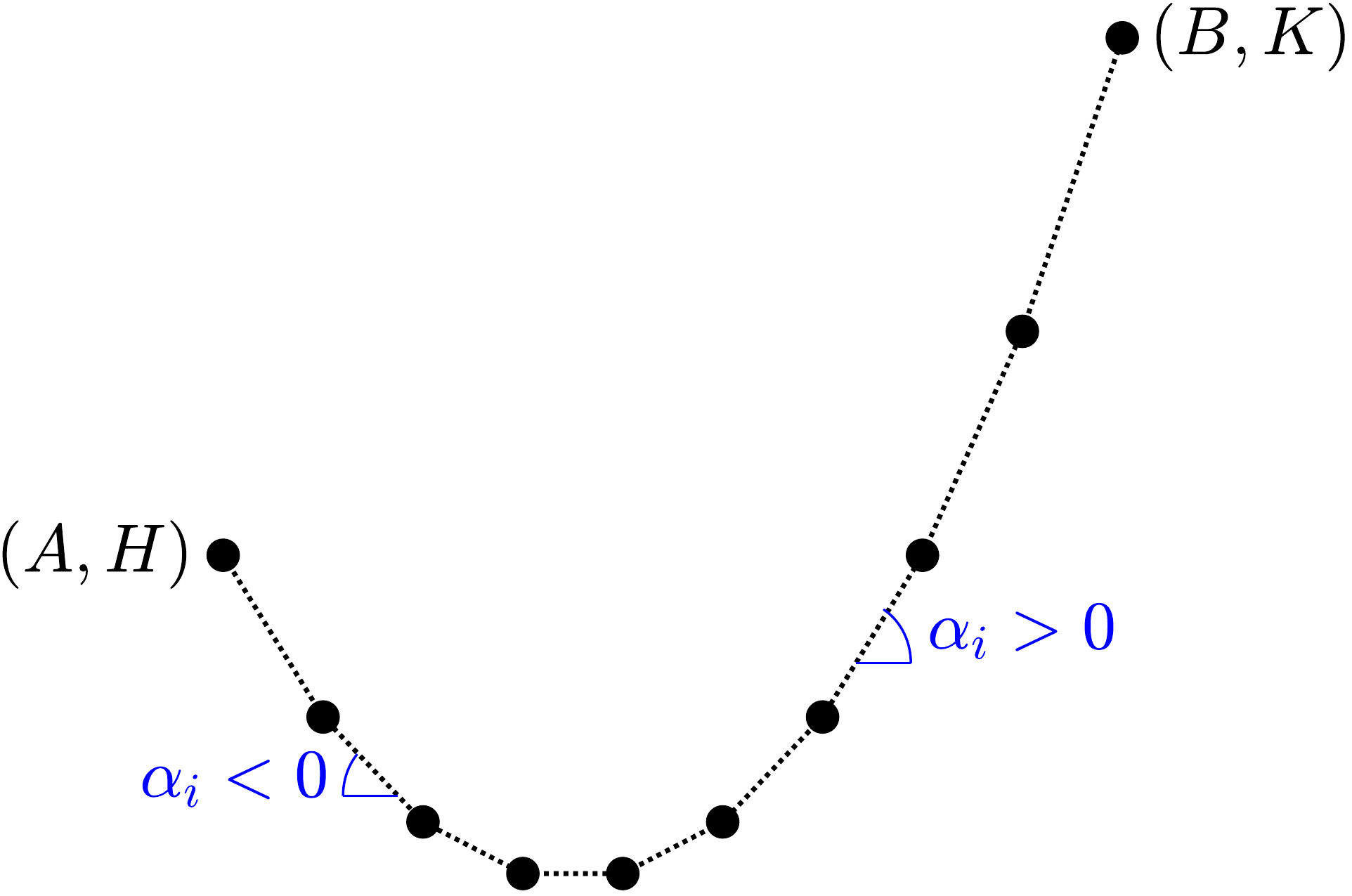}
\caption{The string of springs attached at both end points. The definition of the angles $\alpha_i$ (see text) is 
also indicated here.}
\label{fig:CHAIN_ANGLES}
\end{center}
\end{figure}

For $1 \leq i \leq N-1$, the total vertical force on mass point $i$ equals
$$
-\rho gh -  \frac{\ell_i - h}{q h} \sin \alpha_i + \frac{\ell_{i+1} - h}{q h} \sin \alpha_{i+1}. 
$$
This expression has to be zero, so we arrive at $N-1$ equations that must be satisfied when the cable hangs at rest: 
\begin{equation}
\label{eq:vertical_force_balance}
-\rho gh -  \frac{\ell_i - h}{q h} \sin \alpha_i + \frac{\ell_{i+1} - h}{q h} \sin \alpha_{i+1} = 0, ~~~ 1 \leq i \leq N-1.
\end{equation}
We will now transform these equations in such a way 
that difference quotients approximating derivatives appear, since we are planning to let $h \rightarrow 0$ so that a differential
equation emerges.

Using (\ref{sincosalpha}), we  re-write  (\ref{eq:vertical_force_balance}) as
$$
-\rho hg - \frac{1}{q h} \frac{\ell_i - h}{\ell_i} (y_i - y_{i-1}) + \frac{1}{q h} \frac{\ell_{i+1}- h}{\ell_{i+1}} (y_{i+1} - y_i) = 0.
$$
Multiplying both sides by $\frac{q}{h}$, and with a little bit of algebra:
$$
\frac{1}{h} \left( 
\frac{y_{i+1}- y_i}{h}  \left( 1 -  \frac{h}{\ell_{i+1}} \right)  - 
\frac{y_{i}- y_{i-1}}{h}  \left( 1 -  \frac{h}{\ell_{i}} \right)   \right) =  \rho q g, 
$$
so 
$$ 
 \frac{1}{h}
\left( 
\frac{y_{i+1}- y_i}{h}  \left( 1 -  \frac{1}{\sqrt{ \left( \frac{x_{i+1} - x_{i}}{h} \right)^2
+\left( \frac{y_{i+1} - y_i}{h} \right)^2}}
 \right)  -  \hskip 100pt \right.
 $$
 \begin{equation}
\label{eq:verti_equation_2}
\left. 
\frac{y_i- y_{i-1}}{h}  \left( 1 - \frac{1}{\sqrt{ \left( \frac{x_{i} - x_{i-1}}{h} \right)^2
+\left( \frac{y_i - y_{i-1}}{h} \right)^2  }}
 \right)  \right) =\rho q g.
\end{equation}
The balance of horizontal forces is expressed by the analogous equation 
$$
\frac{1}{h}
\left( 
\frac{x_{i+1}- x_{i}}{h}  \left( 1 -  \frac{1}{\sqrt{ \left( \frac{x_{i+1} - x_{i}}{h} \right)^2
+\left( \frac{y_{i+1} - y_i}{h} \right)^2}}
 \right)  -  \hskip 100pt \right.
 $$
 \begin{equation}
\label{eq:hori_equation_2}
\left. 
\frac{x_{i}- x_{i-1}}{h}  \left( 1 - \frac{1}{\sqrt{ \left( \frac{x_{i} - x_{i-1}}{h} \right)^2
+\left( \frac{y_i - y_{i-1}}{h} \right)^2  }}
 \right)  \right) = 0.
\end{equation}
The right-hand side is zero here because there is no horizontal gravitational force.

\subsection{Continuum limit.} 

We use arc length in the rest state, under no tension, as the independent variable, and denote it by $s$. At the left end, $s=0$, and at the right end, $s=L$. Equation  (\ref{eq:verti_equation_2}) 
is a finite difference discretization of 
$$
\frac{d}{ds} \left( \frac{dy}{ds} \left( 1 - \frac{1}{ \sqrt{ \left( \frac{dx}{ds} \right)^2 + \left( \frac{dy}{ds} \right)^2}} \right) \right) = \rho q g.
$$
The right-hand side of this equation equals $\gamma/L$ (see eq.\ (\ref{def_gamma})).
Integrating once, 
 \begin{equation}
 \label{eq:verti_equation_final}
\frac{dy}{ds} \left( 1 - \frac{1}{ \sqrt{ \left( \frac{dx}{ds} \right)^2 + \left( \frac{dy}{ds} \right)^2}} \right)  =  \frac{\gamma}{L} ~\!  \left( s -  s_{\rm min} \right)
 \end{equation}
 where $s_{\rm min}$ is the parameter corresponding to  the lowest point, at which $\frac{dy}{ds}=0$.
 Similarly, eq.\ (\ref{eq:hori_equation_2}) is a discretization of
$$
 \frac{d}{ds} \left( \frac{dx}{ds} \left( 1 - \frac{1}{ \sqrt{ \left( \frac{dx}{ds} \right)^2 + \left( \frac{dy}{ds} \right)^2}} \right) \right) = 0.
$$
Integrating once:
\begin{equation}
\label{eq:hori_equation_final}
\frac{dx}{ds}  \left( 1 - \frac{1}{ \sqrt{ \left( \frac{dx}{ds} \right)^2 + \left( \frac{dy}{ds} \right)^2}} \right)  = \mu
\end{equation}
for some non-dimensional constant $\mu$ yet to be discussed.

\subsection{Solving the differential equations.}

We simplify  eqs.\ (\ref{eq:verti_equation_final}) and (\ref{eq:hori_equation_final}) by solving for $dx/ds$ and $dy/ds$. 
First, write 
$$
R = \sqrt{ \left( \frac{dx}{ds} \right)^2 + \left( \frac{dy}{ds} \right)^2}.
$$
You may now say ``Wait, since $s$ is arc length, $ds^2 = dx^2+dy^2$, and therefore wouldn't $R$ always be equal to 1?" However, you have to remember that $s$ is arclength of the {\em unstretched} cable, before
it is hung up. Now, however, we are thinking of the cable as it hangs, and it is stretched; therefore $R>1$. 

With this notation,  (\ref{eq:verti_equation_final}) and (\ref{eq:hori_equation_final}) become
\begin{equation}
\label{eq:all_equations_final}
\frac{dx}{ds} = \frac{\mu}{1-\frac{1}{R}} ~~~\mbox{and} ~~~ \frac{dy}{ds} = \frac{\frac{\gamma}{L}  \left( s - s_{\rm min} \right)}{1-\frac{1}{R}}.
\end{equation}
Therefore 
$$
R =  \sqrt{ \left( \frac{dx}{ds} \right)^2 + \left( \frac{dy}{ds} \right)^2} = \sqrt{ \frac{\mu^2}{ \left( 1 - \frac{1}{R} \right)^2} + \frac{
\frac{ \gamma^2}{L^2}  \left( s - s_{\rm min} \right)^2} {\left( 1 - \frac{1}{R} \right)^2}}.
$$
Multiplying by $1-\frac{1}{R}$, we find:
$$
R-1 = \sqrt{\mu^2 +  \frac{ \gamma^2}{L^2}  \left( s -  s_{\rm min} \right)^2}
$$
and therefore 
$$
\frac{1}{1- \frac{1}{R}} = \frac{R}{R-1} = 1 + \frac{1}{R-1} = 1 + \frac{1}{ \sqrt{\mu^2 +  \frac{ \gamma^2}{L^2}  \left( s - s_{\rm min} \right)^2}}.
$$

Using this in (\ref{eq:all_equations_final}), we obtain
\begin{equation}
\label{eq:dxds} 
\frac{dx}{ds} =   \mu  + \frac{1}{\sqrt{1 +  \frac{ \gamma^2}{\mu^2 L^2}  \left( s - s_{\rm min} \right)^2}} 
\end{equation}
and 
\begin{equation}
\label{eq:dyds} 
\frac{dy}{ds}  =  \left( \mu + \frac{1}{\sqrt{1 +    \frac{\gamma^2}{\mu^2 L^2}  \left( s -  s_{\rm min} \right)^2}} \right) \frac{ \gamma}{\mu L}  \left( s - s_{\rm min} \right).
\end{equation} 

The combination $\frac{ \gamma}{\mu L}$ and its square appear three times in eqs.\ (\ref{eq:dxds}) and 
(\ref{eq:dyds}). We simplify the notation by defining
$$
\lambda = \frac{\mu L}{\gamma}. 
$$
Since $\mu$ and $\gamma$ are non-dimensional, $\lambda$ is a length.
It will turn out to be the natural analogue in the elastic case of the shape parameter.
(This isn't obvious at this point, or at least it wasn't to me. I realized it only after having done the calculation that's about to
follow.)
With this notation, (\ref{eq:dxds}) and (\ref{eq:dyds}) become 
$$ 
\frac{dx}{ds} =  \gamma \frac{\lambda}{L} + \frac{1}{\sqrt{1 + \left( \frac{  s - s_{\rm min}}{\lambda} \right)^2}}
~~~\mbox{and} ~~~ 
\frac{dy}{ds} =  \left(  \gamma \frac{\lambda}{L} + \frac{1}{\sqrt{1 + \left( \frac{s - s_{\rm min} }{\lambda} \right)^2}} \right) \frac{s-s_{\rm min} }{\lambda}.
$$
We integrate again to obtain formulas for $x$ and $y$:
\begin{equation}
\label{eq:x_param}
\frac{x-x_{\rm min}}{\lambda}  =     ~ \sinh^{-1} \left(  \frac{s- s_{\rm min} }{\lambda} \right)  \!+~\!  \gamma \frac{\lambda}{L}  \frac{s-s_{\min}}{\lambda}, 
\end{equation}
and 
\begin{equation}
\label{eq:y_param}
\frac{y-y_{\rm min}}{\lambda} =  \sqrt{1 + \left( \frac{s-s_{\rm min}}{\lambda} \right)^2} - 1  ~\!+~\! \gamma  \frac{\lambda}{L}    ~\! \frac{1}{2} \left( \frac{s - s_{\rm min}}{\lambda} \right)^2 
\end{equation}
where $x_{\rm min}$ and $y_{\rm min}$ are the values of $x$ and $y$ when $s=s_{\rm min}$. The constants of integration were chosen to make sure that $s=s_{\rm min}$ corresponds to $x = x_{\rm min}$ and $y=y_{\rm min}$. 

Equations (\ref{eq:x_param}) and (\ref{eq:y_param}) are very similar to eqs.\ (\ref{eq:x_of_s}) and (\ref{eq:y_of_s_rewritten}), the arclength parametrization of
the inelastic catenary. The only difference here are the extra summands
proportional to $\gamma$. 
These terms disappear as the compliance  density $q$ tends to zero (recall $\gamma =  q \rho L g$), so in the limit of vanishing 
compliance density, we obtain the standard description of the inelastic catenary, which we have
thereby re-derived. 

Although this derivation is more involved than the standard one, I prefer it because it paints a microscopic picture of 
the ``tensions forces"  --- even if that microscopic picture is, of course, an idealization.

\subsection{Equations for the parameters.}
The four parameters $\lambda$, $s_{\rm min}$, $x_{\rm min}$, and $y_{\rm min}$ must be chosen so that the conditions
$$
x(0)=A, ~~~y(0)=H, ~~~ x(L)=B, ~~~ y(L)=K
$$
are satisfied.
Using eqs.\ (\ref{eq:x_param}) and (\ref{eq:y_param}), $x(0)=A$ and $y(0)=H$ mean
\begin{equation}
\label{eq:x_min_is}
x_{\rm min} =   \gamma ~\! \frac{ \lambda}{L}  ~\! s_{\rm min} ~+~  A + \lambda \sinh^{-1} \left( \frac{s_{\rm min}}{\lambda} \right)
\end{equation}
and
\begin{equation}
\label{eq:x_min_is}
y_{\rm min} =  - \gamma  ~\! \left( \frac{\lambda}{L} \right)^2 ~\! \frac{L}{2} ~\!  \left(  \frac{s_{\rm min}}{\lambda} \right)^2  ~+~ 
H - \lambda \left(  \sqrt{1 + \left( \frac{s_{\min}}{\lambda} \right)^2} - 1 \right).
\end{equation}

Given that $x(0)=A$ and $y(0)=H$, the remaining two conditions can equivalently be written as $x(L)-x(0)=B-A$, and $y(L)-y(0)=K-H$. Using 
eqs.\ (\ref{eq:x_param}) and (\ref{eq:y_param}), these equations become
\begin{eqnarray} 
\label{bc3_explicit}
 \gamma ~\!+~\! \sinh^{-1} \left(  \frac{L- s_{\rm min}}{\lambda} \right)  +  \sinh^{-1}  \left( \frac{s_{\rm min}}{\lambda}  \right) &=&  \frac{B-A}{\lambda}, \\
\nonumber
 \gamma ~\! \frac{\lambda}{L} ~\!  \frac{1}{2} ~\! \left(   \left(  \frac{L- s_{\rm min}}{\lambda} \right)^2 - \left( \frac{s_{\rm min}}{\lambda} \right)^2  \right)  ~\!+~\!  \hskip 50pt  \\
\label{bc4_explicit}
\hskip 35pt
 \sqrt{1+ \left( \frac{L-s_{\rm min}}{\lambda} \right)^2} - \sqrt{1+ \left( \frac{s_{\rm min}}{\lambda} \right)^2}  &=& \frac{K-H}{\lambda}. 
\end{eqnarray}

Unfortunately,  the extra terms proportional to $\gamma$ in eqs.\ (\ref{bc3_explicit}) and (\ref{bc4_explicit}) undermine the algebra that uncoupled 
the equations earlier, at least as far as I can see. We must solve
 eqs.\ (\ref{bc3_explicit}) and (\ref{bc4_explicit})  jointly 
for $(\lambda, s_{\rm min})$ now.

\subsection{Numerical computation of shape parameter and lowest point.} 
When discussing the inelastic case, I found it convenient to solve for
$$
\xi = \frac{B-A}{2 \lambda}
$$
instead of directly for $\lambda$. I did the same thing here, replacing $\lambda$ by $\frac{B-A}{2 \xi}$ in 
(\ref{bc3_explicit}) and (\ref{bc4_explicit}). Then I solved the resulting equations 
using two-dimensional Newton iteration. Figure \ref{fig:ELASTIC_CHAIN_SIMPLE_FORMULAS} shows examples, with $\gamma$ rising from $0$ to $2$ in steps of 0.2. For each new value of $\gamma$, 
I used the parameters $\xi$ and $s_{\rm min}$ computed for the preceding value as starting values for the 
Newton iteration. The parameters for the elastic catenaries in Fig.\ \ref{fig:ELASTIC_CHAIN_SIMPLE_FORMULAS} are computed in six or fewer Newton 
iterations with 15-digit accuracy. The parameter $\xi$ in the inelastic case takes a bit longer, 9 Newton iterations. 
\begin{figure}[h]
\begin{center}
\includegraphics[scale=0.35]{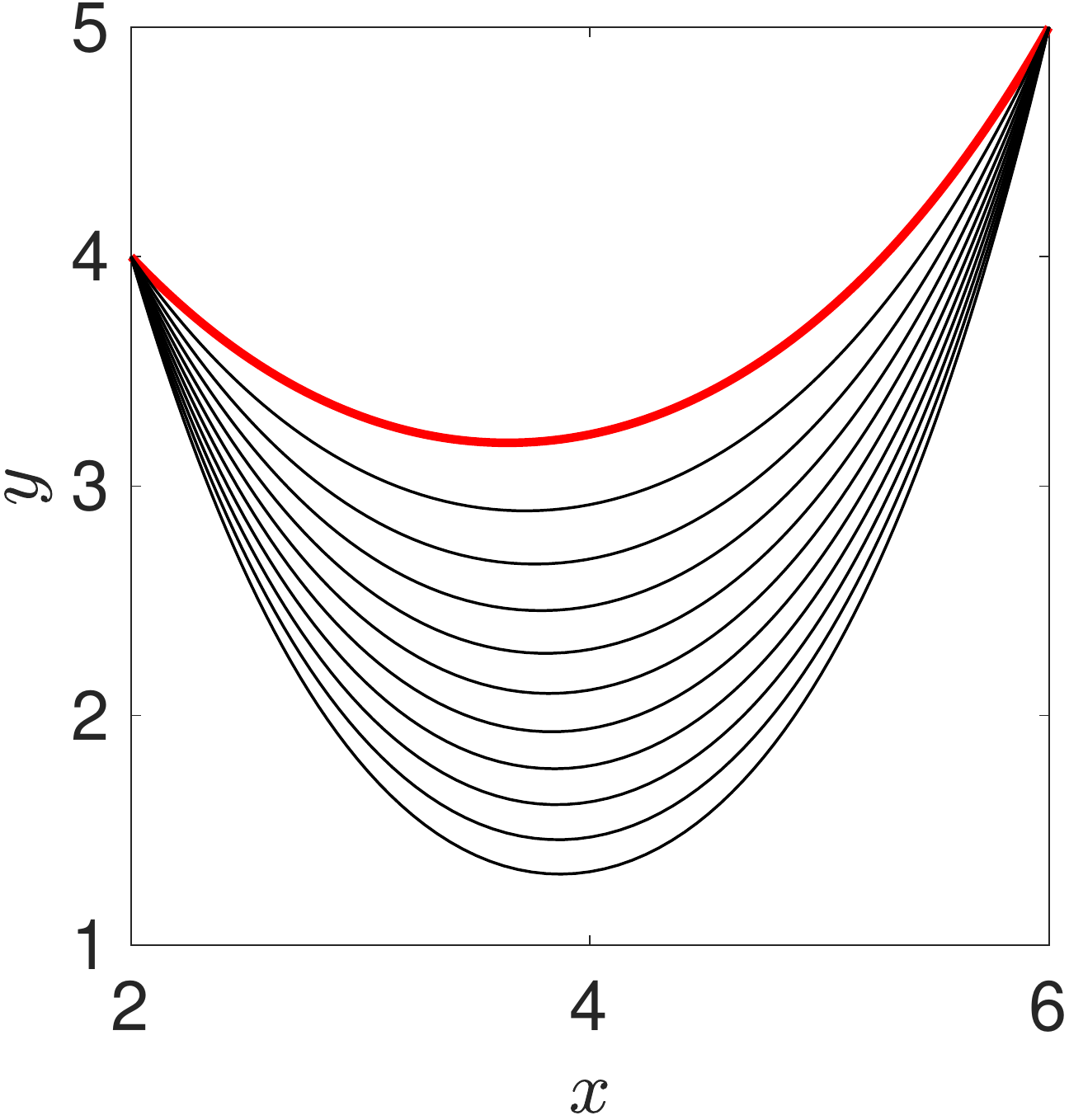}
\caption{An inelastic cable (red), and increasingly elastic cables (black), with $\gamma$ varying from $0$ to $2$ in 
steps of $0.2$.}
\label{fig:ELASTIC_CHAIN_SIMPLE_FORMULAS}
\end{center}
\end{figure}

It does not appear necessary to use this ``continuation" approach. I have not encountered a single example
in which Newton's method,  starting with the values
$\xi$ and $s_{\rm min}$ computed for the inelastic case, did not converge rapidly, even when $\gamma$ is taken
to be very large.

\section{Concluding comments} 

\subsection{Code for Figures 6 and 10.} I will be happy to send you the 
Matlab code generating Figures \ref{fig:TWO_EXAMPLES_NEW} and \ref{fig:ELASTIC_CHAIN_SIMPLE_FORMULAS}; just e-mail me. 

\subsection{Loose ends.} 
Several questions remain unanswered here. First, can we prove that eqs.\ (\ref{bc3_explicit}) and (\ref{bc4_explicit}) 
have a unique solution $(\lambda,s_{\rm min})$ with $\lambda>0$ and $s_{\rm min} \leq \frac{L}{2}$, for any 
choice of $A<B$, $H \leq K$, and $L>0$? That
ought to be the case, but I haven't proved it. 

Second, why does Newton's method for  (\ref{bc3_explicit}) and (\ref{bc4_explicit}), starting with the parameters  for the inelastic cable, always seem to work so well, even when
the compliance is large? Reassuringly, if there were a case in which it didn't converge rapidly, one {\em could} always use 
continuation, raising the compliance gradually, and that would certainly work. However, in my experience it never
seems necessary. 

What if the springs were not linear? In that case, analogues of the 
differential  equations  (\ref{eq:verti_equation_final}) and  (\ref{eq:hori_equation_final}) can still be written down, but they
cannot in general be solved explicitly for $x$ and $y$, so there are no analogues of (\ref{eq:x_param}) and (\ref{eq:y_param}) any
more.

\subsection{Engingineering literature on hanging cables.} 

None of what I have presented here could conceivably be new. In fact there is an extensive 
sophisticated
engineering-oriented literature of which the catenary problem is merely the starting point; see \cite{Greco_et_al,Huang_et_al,Pierce_et_al_1913,Catenary_Analysis,Tang_et_al_2021} for a few examples. However, 
what I have presented here seems difficult if not impossible to extract from that literature.

\subsection{Spider webs.}
One could  consider multiple elastic cables attached to each other, as in a spider web. The ``discrete" model, thinking of spider threads as composed of mass points connected by massless springs, is  straightforward to formulate. There is a substantial literature on the mathematical and computational modeling of spider webs; see
\cite{Aoyanagi_2010,Kawano_2019,Lin_et_al_1995} for a few examples.

\begin{acknowledgment} This article was inspired by Mark Levi's  beautiful discussion of some of the astonishing properties of
catenaries in the May 2021 issue of  {\em SIAM News} \cite{Levi}. I would like to thank the anonymous reviewer for reading my paper so thoughtfully, correcting typos and suggesting improvements. 
\end{acknowledgment}

\begin{biog}
\item[Christoph B\"orgers]  is a Professor of Mathematics at Tufts University. His research interests include mathematical neuroscience,  numerical analysis, and more recently anomalous diffusion
and opinion dynamics.  In
2022 he was the recipient of Tufts University's Leibner Award for Excellence in Teaching and Advising.
\end{biog}

\begin{thebibliography}{10}

\bibitem{Aoyanagi_2010} {\sc Y. Aoyanagi and K. Okumura} (2010). Simple model for the 
mechanics of spider webs. {\em Phys.\ Rev.\ Lett.} 104: 038102.
doi.org/10.1103/PhysRevLett.104.038102

\bibitem{Bernoulli_catenary}
{\sc J.~Bernoulli} (1691). Solutio problematis funicularii. {\em Acta Erud.}:
274--276.

\bibitem{Boehme_et_al_1980}
{\sc R.~B\"ohme, S.~Hildebrandt, and E.~Tausch} (1980). The two-dimensional analogue of the catenary. {\em Pac.\ J.\ Math.}\ 88(2): 247--278.
doi.org/10.2140/PJM.1980.88.247

\bibitem{de_la_Grandville_2022}
{\sc O.~de~La~Grandville} (2022). On a Classic Problem in the Calculus of Variations: Setting Straight Key Properties of the Catenary. {\em Am.\ Math.\ Mon.} 129(2): 103--115. doi.org/10.1080/00029890.2022.2004849



\bibitem{Elastic_catenary}
{\sc E.~Bobillier and P.J.E.~Finck} (1826-1827).  Solution des deux probl\`emes de
  statique propos\'es \`a la page 296 du pr\'ec\'edent volume. {\em Ann. Math. Pures Appl.}\ 17:  59--68.
eudml.org/doc/80155


\bibitem{Gohnert}
{\sc M.~Gohnert} (2022). {\em Shell Structures, Theory and Application}. Springer International Publishing.

  
  \bibitem{Greco_et_al}
{\sc L.~Greco, N.~Impollonia, and M.~Cuomo} (2014). A procedure for the static
  analysis of cable structures following elastic catenary theory.
  {\em Int.\ J.\ Solids Struct.}\ 51(7--8): 1521--1533.  \\ doi.org/10.1016/j.ijsolstr.2014.01.001
  

  \bibitem{Huang_et_al}
{\sc W.~Huang, D.~He, D.~Tong, Y.~Chen, X.~Huang, L.~Qin, and Q.~Fei} (2022). 
  Static analysis of elastic cable structures under mechanical load using
  discrete catenary theory, {\em Fundam.\ Res.}: in press. \\
doi.org/10.1016/j.fmre.2022.03.011
  
  \bibitem{Huygens_catenary}
{\sc C.~Huygens} (1691). Dynaste {Z}ulchemii, solutio problematis funicularii.
  {\em Acta Erud.}: 281--282.
  
  \bibitem{Kawano_2019}
  {\sc A. Kawano and A. Morassi} (2019). Detecting a pray in a spider orb web. {\em SIAM J.\ Appl.\ Math.}\ 79(6): 2506--2529.
doi.org/10.1137/20M1372792
  
  
\bibitem{Leibniz_catenary}
{\sc G.~Leibniz} (1691).  Solutio problematis catenarii. {\em Acta Erud.}:
277--281.

  \bibitem{Levi} 
  {\sc M.~Levi} (2021). Hanging cables and hydrostatics. {\em SIAM News} 54, No. 4.   \\
sinews.siam.org/Details-Page/hanging-cables-and-hydrostatics
  
  \bibitem{Lin_et_al_1995}
  {\sc L.H.~Lin, D.T.~Edmonds, and F.~Vollrath} (1995).  Stuctural engineering of an orb-spider's web. {\em Nature} 373: 146--148.
  doi.org/10.1038/373146a0 
  
  \bibitem{Lopez_2022} 
  {\sc R.~L\'opez} (2022). A dome subjected to compression forces: {A} comparison study between the mathematical
  model, the catenary rotation surface and the paraboloid. {\em Chaos Solit. Fractals} 161: 112350. \\ doi.org/10.1016/j.chaos.2022.112350

\bibitem{Osserman}
{\sc R.~Osserman} (2010). How the {Gateway Arch} got its shape.  {\em Nexus Netw. J.} 12(2): 167--189. \\
doi.org/10.1007/s00004-010-0030-8

\bibitem{Pierce_et_al_1913}
{\sc C.~A. Pierce, F.~J. Adams, and G.~I. Gilchrest} (1913). Theory of the
  non-elastic and elastic catenary as applied to transmission lines, 
  {\em Proc. Inst. Electr. Eng.}\ 32(6):  1373--1391. \\
  doi.org/10.1109/PAIEE.1913.6660750

\bibitem{Catenary_Analysis}
{\sc J.~Qin, J.~Chen, L.~Qiao, J.~Wan, and Y.~Xia} (2016).  Catenary analysis and
  calculation method of track rope of cargo cableway with multiple loads.
  {\em MATEC Web of Conferences} 82: 01008. \\ doi.org/10.1051/matecconf/20168201008
  
  \bibitem{Tang_et_al_2021}
{\sc H.-B.~Tang, Y.~Han, H.~Fu, and B.G.~Xu} (2021). Mathematical modeling of linearly-elastic 
non-prestrained cables based on a local reference frame. 
{\em Appl.\ Math.\ Model.} 91: 695--708.  \\ doi.org/10.1016/j.apm.2020.10.008


\end{thebibliography}
\end{document}